\newdimen\plusheight
\def\+{\;\lower\plusheight\hbox{$+$}\;}
\newdimen\minusheight
\def\-{\;\lower\minusheight\hbox{$-$}\;}
\newdimen\cdotsheight
\def\cds{\lower\cdotsheight\hbox{$\cdots$}}
\numberwithin{equation}{section}
\theoremstyle{plain}
\newtheorem{theorem}{Theorem}[section]
\newtheorem{lemma}[theorem]{Lemma}
\newtheorem{corollary}[theorem]{Corollary}
\newtheorem{proposition}[theorem]{Proposition}
\newtheorem{remark}[theorem]{Remark}
\newtheorem{definition}[theorem]{Definition}
\newtheorem{example}[theorem]{Example}
\def\@bignumber#1#2{%
	\ifx#2\end
	#1\let\next\@gobble
	\else
	#1\hspace{0pt plus 1pt}\let\next\@bignumber
	\fi
	\next#2}
\newcommand{\bignumber}[1]{\@bignumber#1\end}
\begin{document}
	\allowdisplaybreaks
	\title[On $\mathcal{I}^\mathcal{K}$-convergence in topological spaces via semi-open sets] {On $\mathcal{I}^\mathcal{K}$-convergence in a topological space\\ via semi-open sets}
	
	\author{Ankur Sharmah}
	
	\address{Department of Mathematical Sciences, Tezpur University, Napam 784028, Assam, India}
	\email{ankurs@tezu.ernet.in}
	
	\author{Debajit Hazarika}
	\address{Department of Mathematical Sciences, Tezpur University, Napam 784028, Assam, India}
	\email{debajit@tezu.ernet.in}
	
	\subjclass[2010]{Primary: 54A20; Secondary: 40A05; 40A35.}
	
	\keywords{$\mathcal{I}^\mathcal{K}$-convegence, semi-open sets; $\mathcal{S}$-$\mathcal{I}^\mathcal{K}$-convergence, $\mathcal{S}$-$\mathcal{I}^{\mathcal{K}}$-cluster points, semi-compactness, semi-dense set, semi-continuous function, Irresolute function, semi-neighborhood. }
	
	\maketitle
	\begin{abstract}
		A sequece $\{x_n\}$ is {\tiny $\mathcal{S}$-$\mathcal{I}^\mathcal{K}$}-convergent to $x$, if there exists a set $M \in \mathcal{I}^*$ such that \linebreak $\{n \in M: x_n \notin U  \} \in \mathcal{K}$, for every non empty semi-open set $U$ containing $\xi$. In this \linebreak paper, we introduced the concept of {\tiny $\mathcal{S}$-$\mathcal{I}^\mathcal{K}$}-convergence which is a generalization of {\tiny $\mathcal{S}$-$\mathcal{I}$}-convergence introduced by Guevara et al. \cite{GSR20} and discuss the relation with compact sets. Moreover, we introduce the notion of {\tiny $\mathcal{S}$-$\mathcal{I}^\mathcal{K}$}-cluster point of a sequence and the set of {\tiny $\mathcal{S}$-$\mathcal{I}^\mathcal{K}$}-cluster points of a sequence is characterize as semi-closed subsets. Using this concept we prove that if for a semi-Lindeloff space, each sequence has an $\mathcal{S}$-$\mathcal{I}^\mathcal{K}$-cluster point, then the space is semi-compact.
	\end{abstract}
	
\vspace{1mm}
	
	\section{Introduction and some prelimineries}
	After Kuratowski introduced ideals earlier in 1933, the notion has become familiar as a collection of sets considered to be ``small" or ``negligible". An ideal \cite{HNV04} $\mathcal{I}$ on a set $\mathbb{S}$ is a collection of subsets of $\mathbb{S}$ closed under finite unions and subset inclusion. $Fin$ is a basic ideal defined as the collection of all finite subsets of $\mathbb{S}$. In an ordinary space, three basic topological notions namely, convergence, closure and neighborhood, play a crucial role to ascertain other topological properties. Inevitably, convergence has been studied widespreadly since its inception as well as throughout the last century. In the recent past, ideal theory along with the theory of convergence has been used to bring up some promising generalizations of existing concepts in General Topology. In particular, in 2000, two notions of convergence of a sequence, which are termed as $\mathcal{I}$ and $\mathcal{I}^*$-convergence were introduced by Kostyrko et al. \cite{KSW01} for the set of real numbers  and subsequently, in 2005, for a topological space by Lahiri and Das \cite{LD05}. It is doubtless that subsequently it has been practised actively and out of which some of the works can be found in \cite{TH09, DDGB19, STZ05}. Later, in 2011, Macaz and Sleziak \cite{MS11}  introduced the notion of $\mathcal{I^{\mathcal{K}}}$-convergence of a function in a topological space. Though appeared in the context of $\mathcal{I^{\star}}$-convergence, as a matter of fact, {\tiny $\mathcal{I}^\mathcal{K}$}-convergence further extends the concept of ideal convergence. In particular, if the ideals $\mathcal{I}$ and $\mathcal{K}$ coincide, then so do the notions {\tiny $\mathcal{I}^\mathcal{K}$} and $\mathcal{I}$-convergence. Throughout the last decade, {\tiny $\mathcal{I}^\mathcal{K}$}-convergence has been extensively studied by many researchers \cite{DST14, DSS19, MS11}. On the other hand, in 1963, Levine introduced semi-open sets \cite{L63} in a topological space and afterwards, it has been considered to generalize several concepts in General Topology. Recently, Guevara et al. \cite{GSR20} have used the concept of semi-open set in order to define and study the notion $\mathcal{S}$-$\mathcal{I}$-convergence in topological spaces. In this sequel we define {\tiny $\mathcal{I}^\mathcal{K}$}-convergence using semi-open sets in a topological space and denote it by {\tiny $\mathcal{S}$-$\mathcal{I}^\mathcal{K}$}-convergence. 
	
	\vspace{2mm}
	In this article, we mainly deal with the proper ideals (not containing $\mathbb{N}$) on the set of natural numbers $\mathbb{N}$, that include all the finite subsets of $\mathbb{N}$ (admissible), to study different aspects of {\tiny $\mathcal{S}$-$\mathcal{I}^\mathcal{K}$}-convergence in a topological space.
	
	\vspace{2mm}
	Meanwhile, we refer to \cite{HNV04} for the basic general topological terminologies, definitions and results that are mentioned in the content. Thereafter, unless specified otherwise, we term $X$ to be a topological space and $\mathcal{I}$ and $\mathcal{K}$ to be ideals on $\mathbb{N}$. For an ideal $\mathcal{I} \subset P(\mathbb{N})$, we find two  additional subsets of $P(\mathbb{N})$ namely, $\mathcal{I}^{\star}$ or $F(\mathcal{I})$ and $\mathcal{I}^+$, defined as; $\mathcal{I}^{\star} := \{A \subset \mathbb{N} : A^c \in \mathcal{I}\}$ and $\mathcal{I}^+$:= collection of all subsets which do not belong to $\mathcal{I}$. Two definitions of ideal convergence of a sequence in a topological space, denoted by $\mathcal{I}$ and $\mathcal{I^{\star}}$-convergence, that appeared in \cite{LD05}, are defined as follows. For a topological space X, a sequence $x := \{x_n\}_{n \in \mathbb{N}}$ is said to be $\mathcal{I}$-convergent to $\xi$, denoted by $x_n \rightarrow _\mathcal{I} \xi$, if $\{n \in \mathbb{N} : x_n \notin {U}\} \in \mathcal{I}$, for all neighborhood ${U}$ of $\xi$. In addition, a sequence $x := \{x_n\}_{n \in \mathbb{N}}$ is $\mathcal{I^{\star}}$-convergent to $\xi$ if there exists $M := \{m_1 < m_2 <... < m_k <... \} \in \mathcal{I^{\star}}$ (i.e. $\mathbb{N} \setminus M \in \mathcal{I}$), such that the subsequence $\{x_{m_k}\}$ is convergent to $0$. Lahiri and Das \cite{LD05} proved the equivalence of $\mathcal{I}$ and $\mathcal{I}^*$-convergence, where the space is assumed to be a first axiom $T_1$ space with atleast one limit point. Again, for a given function $f: S \rightarrow X$ on a topological space $X$ which is in fact, a generalization of a sequence, Macaz and Sleziak \cite{MS11} defined $\mathcal{I^{\mathcal{K}}}$-convergence for two ideals $\mathcal{I}$ and $\mathcal{K}$ on $S$.
	
	\begin{definition}{\rm{\cite[Lemma 2.1]{MS11}}}
		{\rm A function $f : S \to X$ is said to be} $\mathcal{I^{\mathcal{K}}}$-convergent { \rm to $x \in X$, if there exists a set $M \in F(\mathcal{I})$ such that the function $g : S \to X$ given by $g(s) = f(s)$, if $s \in M$ and $g(s)= x$, if $s \notin M$, is $\mathcal{K}$-convergent to $x$. If $f$ is $\mathcal{I}^{\mathcal{K}}$-convergent to $x$, then we write $\mathcal{I^{\mathcal{K}}}-\lim f = x$.}
	\end{definition}
	
	\begin{proposition}{\rm{\cite[Proposition 2.1]{SH21}}} \label{Lp}
	Let $X$ be a topological space and $f : S \rightarrow X$ be a function. Let $\mathcal{I}, \mathcal{K}$ be two ideals on $S$ such that $\mathcal{I} \cup \mathcal{K}$ is an ideal. Then
		\begin{itemize}
			\item[\textup{(i)}]	$ f(s) \rightarrow_{\mathcal{I}^{\mathcal{K}^*}} x \equiv f(s) \rightarrow_{(\mathcal{I} \cup \mathcal{K})^*} x$.
			\item[\textup{(ii)}] $f(s) \rightarrow_\mathcal{\mathcal{I}^{\mathcal{K}}} x$ implies $f(s) \rightarrow_{\mathcal{I} \cup \mathcal{K}} x$.
		\end{itemize}
	\end{proposition}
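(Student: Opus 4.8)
The plan is to prove both parts by unwinding the definitions of $\mathcal{I}^{\mathcal{K}}$-, $\mathcal{I}^{\mathcal{K}^{*}}$-, $(\mathcal{I}\cup\mathcal{K})$- and $(\mathcal{I}\cup\mathcal{K})^{*}$-convergence and by exploiting the one structural hypothesis available, namely that $\mathcal{I}\cup\mathcal{K}$ is closed under finite unions (and, being an ideal, under subsets). The computational backbone is the following elementary observation: if $M\subseteq S$ and $g$ is the function with $g=f$ on $M$ and $g\equiv x$ off $M$, then for every neighborhood $U$ of $x$ one has $\{s:g(s)\notin U\}=\{s\in M:f(s)\notin U\}$, since $g$ takes the value $x\in U$ outside $M$. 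I will use this identity, together with the facts that $F(\mathcal{I})=\{A:A^{c}\in\mathcal{I}\}$ and that $\mathcal{I},\mathcal{K}$ are closed under subsets, throughout.

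For part (i), first suppose $f\rightarrow_{\mathcal{I}^{\mathcal{K}^{*}}}x$. Then there is $M\in F(\mathcal{I})$ whose associated $g$ is $\mathcal{K}^{*}$-convergent to $x$, i.e. there is $N\in F(\mathcal{K})$ with $\{s\in N:g(s)\notin U\}$ finite for every neighborhood $U$ of $x$. Set $P=M\cap N$; then $P^{c}=M^{c}\cup N^{c}$ with $M^{c}\in\mathcal{I}$ and $N^{c}\in\mathcal{K}$, so $P^{c}\in\mathcal{I}\cup\mathcal{K}$ (this is where the hypothesis enters), i.e. $P\in F(\mathcal{I}\cup\mathcal{K})$. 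Since $P\subseteq N$ and $g=f$ on $P\subseteq M$, the set $\{s\in P:f(s)\notin U\}$ is finite for every $U$, which is precisely $f\rightarrow_{(\mathcal{I}\cup\mathcal{K})^{*}}x$. Conversely, if $f\rightarrow_{(\mathcal{I}\cup\mathcal{K})^{*}}x$, pick $P\in F(\mathcal{I}\cup\mathcal{K})$ with $\{s\in P:f(s)\notin U\}$ finite for all $U$; then $P^{c}\in\mathcal{I}\cup\mathcal{K}$ gives $P^{c}\in\mathcal{I}$ or $P^{c}\in\mathcal{K}$. In the former case take $M=P\in F(\mathcal{I})$ and $N=S\in F(\mathcal{K})$; in the latter take $M=S\in F(\mathcal{I})$ and $N=P\in F(\mathcal{K})$. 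In either case the associated $g$ satisfies $\{s\in N:g(s)\notin U\}=\{s\in P:f(s)\notin U\}$, which is finite, so $g$ is $\mathcal{K}^{*}$-convergent to $x$ via $N$, whence $f\rightarrow_{\mathcal{I}^{\mathcal{K}^{*}}}x$.

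For part (ii), assume $f\rightarrow_{\mathcal{I}^{\mathcal{K}}}x$, so there is $M\in F(\mathcal{I})$ whose associated $g$ is $\mathcal{K}$-convergent to $x$: $\{s:g(s)\notin U\}\in\mathcal{K}$ for every neighborhood $U$ of $x$. Fix such a $U$. By the identity above, $\{s\in M:f(s)\notin U\}=\{s:g(s)\notin U\}\in\mathcal{K}$, while $\{s\in M^{c}:f(s)\notin U\}\subseteq M^{c}\in\mathcal{I}$ and hence lies in $\mathcal{I}$. Therefore $\{s:f(s)\notin U\}$ is the union of a member of $\mathcal{I}$ and a member of $\mathcal{K}$, and because $\mathcal{I}\cup\mathcal{K}$ is an ideal this union belongs to $\mathcal{I}\cup\mathcal{K}$. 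As $U$ was arbitrary, $f\rightarrow_{\mathcal{I}\cup\mathcal{K}}x$.

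The whole argument is bookkeeping, so there is no serious obstacle; the one spot that needs care is in (i), where the witness $N$ for the inner $\mathcal{K}^{*}$-convergence of $g$ must be intersected with the witness $M$ for the outer $F(\mathcal{I})$ quantifier, and one must check simultaneously that the intersection still lies in $F(\mathcal{I}\cup\mathcal{K})$ — exactly the step that would fail if $\mathcal{I}\cup\mathcal{K}$ were merely a union of two ideals rather than an ideal — and that finiteness of $\{s\in N:g(s)\notin U\}$ is inherited by the subset $P\subseteq N$. Everything else follows by substituting $S$ (whose complement $\emptyset$ lies in every ideal) for whichever witness is not needed.
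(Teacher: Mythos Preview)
Your argument is correct. Note, however, that the present paper does not actually contain its own proof of this proposition: it is quoted as \cite[Proposition 2.1]{SH21} in the preliminaries, and the later Proposition in Section~2 merely points the reader back to that reference. So there is no in-paper proof to compare against; what you have written is the natural unwinding of the definitions and is essentially the argument one expects in \cite{SH21}. The key structural point---that $M^{c}\cup N^{c}\in\mathcal{I}\cup\mathcal{K}$ requires $\mathcal{I}\cup\mathcal{K}$ to be closed under finite unions, and that in part~(ii) the decomposition $\{s:f(s)\notin U\}=\{s\in M:f(s)\notin U\}\cup\{s\notin M:f(s)\notin U\}$ lands the two pieces in $\mathcal{K}$ and $\mathcal{I}$ respectively---is handled correctly and is exactly where the hypothesis is used.
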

	
	\begin{theorem}{\rm{\cite[Theorem 3.11]{SH21}}} \label{IKc}
		Every continuous function preserves  $\mathcal{I}^\mathcal{K}$-convergence.
	\end{theorem}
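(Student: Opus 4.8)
The plan is to reduce this to the easier fact that continuity preserves ordinary $\mathcal{K}$-convergence, and then transport that through the witness set appearing in the definition of $\mathcal{I}^\mathcal{K}$-convergence. Concretely, I read the statement as: if $h : X \to Y$ is continuous and $f : S \to X$ is $\mathcal{I}^\mathcal{K}$-convergent to $x$, then $h \circ f : S \to Y$ is $\mathcal{I}^\mathcal{K}$-convergent to $h(x)$.

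First I would fix such $h$ and $f$, with $\mathcal{I}^\mathcal{K}\text{-}\lim f = x$. By the definition of $\mathcal{I}^\mathcal{K}$-convergence there exists $M \in F(\mathcal{I})$ such that the modified function $g : S \to X$, defined by $g(s) = f(s)$ for $s \in M$ and $g(s) = x$ for $s \notin M$, satisfies $g \rightarrow_\mathcal{K} x$. The key observation is that the function obtained from $h \circ f$ by the very same recipe, namely $g' : S \to Y$ with $g'(s) = h(f(s))$ for $s \in M$ and $g'(s) = h(x)$ for $s \notin M$, is exactly the composite $h \circ g$: on $M$ we have $g(s) = f(s)$, so $h(g(s)) = h(f(s))$, while off $M$ we have $g(s) = x$, so $h(g(s)) = h(x)$.

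It then remains to verify that $h \circ g$ is $\mathcal{K}$-convergent to $h(x)$. Let $V$ be an arbitrary neighborhood of $h(x)$ in $Y$. Continuity of $h$ makes $h^{-1}(V)$ a neighborhood of $x$ in $X$, and from the set identity $\{ s \in S : h(g(s)) \notin V \} = \{ s \in S : g(s) \notin h^{-1}(V) \}$ together with $g \rightarrow_\mathcal{K} x$ we obtain $\{ s \in S : h(g(s)) \notin V \} \in \mathcal{K}$. Hence $h \circ g = g'$ is $\mathcal{K}$-convergent to $h(x)$, and since $M \in F(\mathcal{I})$, this shows by definition that $h \circ f$ is $\mathcal{I}^\mathcal{K}$-convergent to $h(x)$.

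I do not anticipate a genuine obstacle here; the argument is essentially bookkeeping. The only points deserving care are the identification $g' = h \circ g$, which is precisely what allows a single set $M \in F(\mathcal{I})$ to serve simultaneously as a witness for $f$ and for $h \circ f$, and the observation that the classical ``continuity preserves ideal convergence'' step is purely about preimages of neighborhoods and is completely insensitive to which ideal ($\mathcal{K}$ here) is in play.
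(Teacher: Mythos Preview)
Your argument is correct and is exactly the natural one: fix the witness set $M \in F(\mathcal{I})$, note that the $\mathcal{K}$-modified function for $h\circ f$ with respect to this same $M$ is precisely $h\circ g$, and then use that continuity preserves $\mathcal{K}$-convergence via preimages of neighborhoods. There is nothing to add or correct.

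As for comparison with the paper: the paper does not actually supply a proof of this theorem. It is quoted in the preliminaries as a known result from \cite{SH21} and is only used later (in the converse direction of Theorem~\ref{PSIK}, where continuity of the projection maps is invoked). Your write-up is the standard proof one would give; it matches in spirit the approach the paper takes in its own Theorem~\ref{scp}, which handles the semi-continuous analogue by the same preimage reasoning.
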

	Some of the definitions and results in \cite{GSR20}, \cite{L63}, \cite{CH72}, \cite{M11}, \cite{MP75} that are used in the content of the accompanying sections are listed below. 
	\vspace{2mm}
	\begin{definition} \label{D1}
		{\rm Let $X$ be a topological space. Then}
		\begin{itemize}
			\item[(1)] 	{\rm$O \subset X$ is said to be} semi-open, {\rm if there exists an open set $U$ such that $U \subset A \subset \overline{U}$. The collection of all semi-open subsets of $X$ is denoted by $SO(X)$}.
			\item[(2)] {\rm The complement of a semi-open set is called a} semi-closed set.
			\item[(3)] {\rm The} semi-closure {\rm of a subset $F$ of $X$, denoted by} $sCl(F)$, { \rm is defined as the intersection of all semi-closed set containing $F$. Otherwise, a point $x \in sCl(A)$ if and only if for every semi-open set $U$ containing $x$, $U \cap A \neq \phi$}.
			\item[(4)] { \rm A function $f: X \rightarrow Y$ is said to be} irresolute, { \rm  if $f^{-1}(O) \in SO(X)$ for each $O \in SO(Y)$}. 
			\item[(5)] {\rm A function $f : X \rightarrow Y$ is said to be} semi-continuous, { \rm if $f^{-1}(O) \in SO(X)$ for each open set $O \in Y$}.
			\item[(6)] { \rm A topological space $X$ is said to be} semi-Hausdorff, { \rm if for every distinct pair $x$, $y \in X$, there exist disjoint semi-open set $U$ containing $x$ and $y$ respectively}.
		\end{itemize}
	\end{definition}
	\vspace{1mm}
	\begin{theorem} \label{sc}
		 {\rm{\cite{L63}}}	A function $f: X \rightarrow Y$ is semi-continuous if and only if for each $x \in X$ and each open $V$ in $Y$ containing $f(x)$, there exist $U \in SO(X)$ such that $x \in U$ and $f(U) \subset V$.
	\end{theorem}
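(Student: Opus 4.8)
The plan is to reduce the statement to the one structural fact about semi-open sets that it genuinely rests on: an arbitrary union of semi-open subsets of $X$ is semi-open. So I would first record this as a preliminary observation. If $\{A_\alpha\}_\alpha\subseteq SO(X)$, choose for each $\alpha$ an open set $W_\alpha$ with $W_\alpha\subseteq A_\alpha\subseteq\overline{W_\alpha}$; then $W:=\bigcup_\alpha W_\alpha$ is open and $W\subseteq\bigcup_\alpha A_\alpha\subseteq\bigcup_\alpha\overline{W_\alpha}\subseteq\overline{W}$, so $\bigcup_\alpha A_\alpha$ is trapped between an open set and its closure and is therefore semi-open. With this in hand, both directions follow directly from the definition that $f$ is semi-continuous iff $f^{-1}(O)\in SO(X)$ for every open $O\subseteq Y$.

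For the forward implication, assume $f$ is semi-continuous and fix $x\in X$ and an open set $V\subseteq Y$ with $f(x)\in V$. Then $U:=f^{-1}(V)\in SO(X)$, $x\in U$ since $f(x)\in V$, and $f(U)=f(f^{-1}(V))\subseteq V$, so $U$ is the required semi-open set. For the converse, fix an open set $O\subseteq Y$; for each $x\in f^{-1}(O)$ apply the hypothesis to the open set $O$ (which contains $f(x)$) to obtain $U_x\in SO(X)$ with $x\in U_x$ and $f(U_x)\subseteq O$, equivalently $U_x\subseteq f^{-1}(O)$. Then $f^{-1}(O)=\bigcup_{x\in f^{-1}(O)}U_x$ is a union of semi-open sets, hence semi-open by the observation; as $O$ was arbitrary, $f$ is semi-continuous.

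The argument has no real obstacle: it is the standard ``continuity $\Leftrightarrow$ local/pointwise formulation'' proof, with the only delicate point being the union-of-semi-open-sets lemma, which is where the geometry of $SO(X)$ actually enters. If one wishes to bypass an explicitly stated lemma, the same proof works verbatim using the equivalent characterization $A\in SO(X)\iff A\subseteq\overline{\operatorname{int}A}$, which again reduces the verification to taking closures of a union of interiors.
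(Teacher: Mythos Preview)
Your proof is correct and complete: the forward direction is immediate from $U=f^{-1}(V)\in SO(X)$, and the converse correctly hinges on the fact that arbitrary unions of semi-open sets are semi-open, which you establish cleanly via $W=\bigcup_\alpha W_\alpha$. Note, however, that the paper does not supply its own proof of this theorem---it is stated with a citation to Levine~\cite{L63} as a known preliminary result---so there is no in-paper argument to compare against; your argument is essentially the standard one found in Levine's original paper.
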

	
	\begin{theorem} \label{ir}
		{\rm{\cite{GSR20}}}	A function $f: X \rightarrow Y$ is irresolute if and only if for each $x \in X$ and each $V \in SO(Y)$ containing $f(x)$, there exist $U \in SO(X)$ such that $x \in U$ and $f(U) \subset V$.
	\end{theorem}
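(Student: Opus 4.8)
The plan is to prove both implications by mirroring the proof of the semi-continuity characterization in Theorem \ref{sc}, with open sets of the codomain replaced throughout by semi-open sets, and to rely on the elementary fact (due to Levine, \cite{L63}) that an arbitrary union of semi-open subsets of a space is again semi-open.

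For the forward implication, I would assume $f$ is irresolute and take an arbitrary $x \in X$ together with an arbitrary $V \in SO(Y)$ containing $f(x)$. The natural candidate is $U := f^{-1}(V)$: by the definition of irresoluteness this set lies in $SO(X)$; it contains $x$ because $f(x) \in V$; and $f(U) = f(f^{-1}(V)) \subseteq V$ is immediate. Hence the local condition holds with no further work.

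For the converse, I would assume the local condition and fix an arbitrary $O \in SO(Y)$, the goal being $f^{-1}(O) \in SO(X)$. If $f^{-1}(O) = \emptyset$ there is nothing to prove, since $\emptyset$ is open and hence semi-open. Otherwise, for each $x \in f^{-1}(O)$ we have $f(x) \in O$, so the hypothesis supplies some $U_x \in SO(X)$ with $x \in U_x$ and $f(U_x) \subseteq O$; the latter says precisely that $U_x \subseteq f^{-1}(O)$. Therefore $f^{-1}(O) = \bigcup_{x \in f^{-1}(O)} U_x$ is a union of semi-open sets, and so it is semi-open.

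The only point that needs an auxiliary observation — which I would flag as the ``hard part,'' although it is routine — is the stability of $SO(X)$ under arbitrary unions: choosing open sets $W_\alpha$ with $W_\alpha \subseteq U_\alpha \subseteq \overline{W_\alpha}$, one gets $\bigcup_\alpha W_\alpha \subseteq \bigcup_\alpha U_\alpha \subseteq \bigcup_\alpha \overline{W_\alpha} \subseteq \overline{\bigcup_\alpha W_\alpha}$, so the open set $\bigcup_\alpha W_\alpha$ witnesses that $\bigcup_\alpha U_\alpha$ is semi-open. Since this is exactly Levine's original remark in \cite{L63}, the converse direction can simply invoke it rather than reprove it.
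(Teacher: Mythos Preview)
The paper does not supply its own proof of this theorem; it is quoted verbatim from \cite{GSR20} as a preliminary result and used later only as a black box (in the proof that irresolute maps preserve $\mathcal{S}$-$\mathcal{I}^\mathcal{K}$-convergence). Your argument is correct and is exactly the standard proof of this characterization: the forward direction is immediate with $U=f^{-1}(V)$, and the converse writes $f^{-1}(O)$ as a union of the local semi-open witnesses and invokes Levine's observation that $SO(X)$ is closed under arbitrary unions. There is nothing to compare against in the present paper, and nothing to correct in your proposal.
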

	
	\begin{definition}{\rm{\cite[Definition 3.4]{GSR20}}} \label{wac}
		{\rm Let $X$ be a space and $A \subset X$. We say that $p \in X$ is a} semi-$\omega$-accumulation point {\rm of $A$ if for every semi-open set $U$ containing $p$, $U \cap A$ is an infinite set.}
	\end{definition}
	
	\begin{theorem}{\rm{\cite[Theorem 3.5]{GSR20}}} \label{clap}
		Let $X$ be a space and $\mathcal{I}$ be a proper ideal. If every \linebreak sequence $\{x_n\}$ in $X$ has $\mathcal{S}$-$\mathcal{I}$-cluster point, then every infinite subset of $X$ has a \linebreak semi-$\omega$-accumulation point. The
		converse is true if $\mathcal{I}$ does not contain any infinite sets.
	\end{theorem}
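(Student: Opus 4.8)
The plan is to prove the two implications separately, reading the notion of $\mathcal{S}$-$\mathcal{I}$-cluster point in the standard way: $\xi$ is an $\mathcal{S}$-$\mathcal{I}$-cluster point of $\{x_n\}$ provided $\{n \in \mathbb{N} : x_n \in U\} \in \mathcal{I}^+$ for every semi-open set $U$ containing $\xi$. Since the ambient ideal $\mathcal{I}$ is admissible, membership of such an index set in $\mathcal{I}^+$ forces it to be infinite, and this observation is the bridge between ``cluster point'' statements and ``semi-$\omega$-accumulation'' statements.

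For the forward direction, let $A \subseteq X$ be infinite. First I would choose a one-to-one sequence $\{x_n\}_{n\in\mathbb{N}}$ with $x_n \in A$ for all $n$, which is possible since $A$ is infinite. By hypothesis this sequence has an $\mathcal{S}$-$\mathcal{I}$-cluster point $\xi \in X$, and I claim $\xi$ is a semi-$\omega$-accumulation point of $A$. Indeed, fix a semi-open set $U$ with $\xi \in U$; then $B := \{n \in \mathbb{N} : x_n \in U\} \notin \mathcal{I}$, so $B$ is infinite by admissibility. Because $n \mapsto x_n$ is injective, $\{x_n : n \in B\}$ is an infinite subset of $U \cap A$, hence $U \cap A$ is infinite; as $U$ was arbitrary, $\xi$ witnesses Definition \ref{wac} for $A$.

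For the converse, assume in addition that $\mathcal{I}$ contains no infinite set, and let $\{x_n\}$ be an arbitrary sequence in $X$. I would split into two cases according to the range $A := \{x_n : n \in \mathbb{N}\}$. If $A$ is finite, a pigeonhole argument yields a point $x \in A$ with $\{n : x_n = x\}$ infinite; then for every semi-open $U \ni x$ the set $\{n : x_n \in U\}$ contains this infinite set, hence lies in $\mathcal{I}^+$ since $\mathcal{I}$ has no infinite member, so $x$ is an $\mathcal{S}$-$\mathcal{I}$-cluster point of $\{x_n\}$. If $A$ is infinite, apply the hypothesis to obtain a semi-$\omega$-accumulation point $\xi$ of $A$; for every semi-open $U \ni \xi$ the set $U \cap A$ is infinite, which forces $\{n : x_n \in U\}$ to be infinite and therefore in $\mathcal{I}^+$, so again $\xi$ is an $\mathcal{S}$-$\mathcal{I}$-cluster point.

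The only genuinely delicate points are the injective extraction in the forward direction, which is what turns infinitely many indices into infinitely many points of $U \cap A$, and the role of the extra hypothesis in the converse: without the assumption that $\mathcal{I}$ contains no infinite set, an infinite index set may still belong to $\mathcal{I}$, so a semi-$\omega$-accumulation point of the range need not be an $\mathcal{S}$-$\mathcal{I}$-cluster point and the equivalence fails. Everything else is routine bookkeeping with semi-open sets, for which one only needs that $X \in SO(X)$ together with the definitions of semi-$\omega$-accumulation point and of $\mathcal{I}^+$.
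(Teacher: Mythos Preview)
Your argument is correct, but note that the present paper does not actually prove Theorem~\ref{clap}: it is quoted verbatim from \cite[Theorem~3.5]{GSR20} as a preliminary result, with no accompanying proof, so there is nothing in this paper to compare your proposal against directly.

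The closest in-paper proof is that of the $\mathcal{S}$-$\mathcal{I}^{\mathcal{K}}$ analogue stated just after Remark~\ref{ikcr}. For the forward implication that proof is exactly your argument specialised to the two-ideal setting: choose a sequence of distinct points from the infinite set $F$, take an $\mathcal{S}$-$\mathcal{I}^{\mathcal{K}}$-cluster point $a$, use admissibility to conclude that $\{n\in M: x_n\in U\}$ is infinite, and then use distinctness to see that $U\cap F$ is infinite. For the converse the paper gives no self-contained argument at all: it simply invokes Remark~\ref{ikcr} to reduce to the $\mathcal{S}$-$\mathcal{I}$ case and then appeals to Theorem~\ref{clap} itself. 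Your case split on whether the range of $\{x_n\}$ is finite or infinite, together with the observation that the extra hypothesis forces $\mathcal{I}=Fin$ so that ``infinite'' and ``in $\mathcal{I}^{+}$'' coincide, therefore supplies strictly more detail than anything appearing in this paper.
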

	
	\section{Some properties of $\mathcal{S}$-$\mathcal{I}^\mathcal{K}$-convergence}
		For a space $X$, a sequence $x=\{x_n\}$ is said to be $\mathcal{S}$-$\mathcal{I}$-convergent \cite{GSR20} to a point $\xi \in X$ if for every non-empty semi-open set $U$ containing $\xi$, the set $\{n \in \mathbb{N} : x_n \in U \}$ belongs to $\mathcal{I}$. Then, $\xi$ is said to be the $\mathcal{S}$-$\mathcal{I}$-limit of $\{x_n\}$ and is denoted by $\mathcal{S}$-$\mathcal{I}$-$\lim x_n = \xi$. In this section, we introduce the concept of $\mathcal{S}$-$\mathcal{I}^\mathcal{K}$-convergent sequence in a topological space and study some of its properties.
	
	\begin{definition}
		{\rm A sequence $\{x_n\}$ is said to be} $\mathcal{S}$-$\mathcal{I}^\mathcal{K}$-convergent {\rm to a point $ \xi \in X$ if there exists a set $M \in \mathcal{I}^*$ such that for every non empty semi-open set $U$ containing $\xi$, we have $\{n \in M: x_n \notin U  \} \in \mathcal{K}$.} {\rm Subsequently, $\xi$  is said to be a} $\mathcal{S}$-$\mathcal{I}^\mathcal{K}$-limit {\rm of $\{x_n\}$ and is denoted by $\mathcal{S}$-$\mathcal{I}^\mathcal{K}$-$\lim x_n = \xi $.}
	\end{definition}
	
	As usual, if the ideal $\mathcal{K}$ doesnot contain any infinite set, then above definition speaks about $\mathcal{S}$-$\mathcal{I}^*$-convergence. Again, $\mathcal{K}$-convergence implies $\mathcal{I}^\mathcal{K}$-convergence \cite{MS11}, analogously, we have the following lemma.
	\begin{lemma} 
		 $\mathcal{S}$-$\mathcal{K}$ convergence implies $\mathcal{S}$-$\mathcal{I}^\mathcal{K}$-convergence.
		\end{lemma}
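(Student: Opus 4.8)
The plan is to mimic, in the semi-open setting, the standard argument that $\mathcal{K}$-convergence implies $\mathcal{I}^{\mathcal{K}}$-convergence: the witness set in the definition of $\mathcal{S}$-$\mathcal{I}^{\mathcal{K}}$-convergence can simply be taken to be all of $\mathbb{N}$. First I would assume that $\{x_n\}$ is $\mathcal{S}$-$\mathcal{K}$-convergent to $\xi$, i.e. that $\{n\in\mathbb{N} : x_n\notin U\}\in\mathcal{K}$ for every non-empty semi-open set $U$ containing $\xi$.

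Next I would exhibit the required set $M\in\mathcal{I}^{*}$. The natural (indeed forced) choice is $M=\mathbb{N}$: since $\mathcal{I}$ is an (admissible) ideal, $\emptyset=\mathbb{N}^{c}\in\mathcal{I}$, and hence $\mathbb{N}\in\mathcal{I}^{*}$ by the definition $\mathcal{I}^{*}=\{A\subseteq\mathbb{N} : A^{c}\in\mathcal{I}\}$. With this choice, for every non-empty semi-open set $U$ containing $\xi$ we have
\[
\{n\in M : x_n\notin U\}=\{n\in\mathbb{N} : x_n\notin U\}\in\mathcal{K},
\]
where the membership in $\mathcal{K}$ is exactly the hypothesis of $\mathcal{S}$-$\mathcal{K}$-convergence. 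Therefore $\{x_n\}$ satisfies the definition of $\mathcal{S}$-$\mathcal{I}^{\mathcal{K}}$-convergence to $\xi$, which completes the argument.

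There is essentially no obstacle here: the only thing that has to be checked is that $\mathbb{N}\in\mathcal{I}^{*}$, which is immediate from admissibility of $\mathcal{I}$, and that restricting the index set from $\mathbb{N}$ to $M=\mathbb{N}$ changes nothing. One could also phrase it via the function reformulation of Definition~1.1 in order to align it with the Macaz--Sleziak framework, but the direct set-theoretic verification above is the cleanest. If desired, I would additionally remark that this is the semi-open analogue of the inclusion used in \cite{MS11}, so that $\mathcal{S}$-$\mathcal{I}^{\mathcal{K}}$-convergence genuinely sits between $\mathcal{S}$-$\mathcal{K}$- and the later $\mathcal{S}$-$\mathcal{I}$-type notions.
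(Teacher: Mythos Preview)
Your argument is correct and is exactly the intended one: the paper itself omits the proof, merely flagging the lemma as the semi-open analogue of the implication ``$\mathcal{K}$-convergence $\Rightarrow$ $\mathcal{I}^{\mathcal{K}}$-convergence'' from \cite{MS11}, and your choice $M=\mathbb{N}$ is precisely how that implication is proved. One cosmetic point: you do not need admissibility to conclude $\mathbb{N}\in\mathcal{I}^{*}$, since $\emptyset\in\mathcal{I}$ holds for every ideal by downward closure.
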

	For the sequential criteria, we extend the Proposition \ref{Lp} for semi-open sets.
	\begin{proposition}
		Let $\{x_n\}$ be a sequence in $X$. For $\mathcal{I}, \mathcal{K}$ be two ideals on $\mathbb{N}$ such that $\mathcal{I} \cup \mathcal{K}$ is an ideal. Then 
		
		\begin{itemize}
			\item[\textup{i)}]	$\{x_n\}$ is $\mathcal{S}$-$\mathcal{I}^{\mathcal{K}^*}$ converges $\xi$ if and only if $\{x_n\}$ is $\mathcal{S}$-$(\mathcal{I} \cup \mathcal{K})^*$-converges to $\xi$.
			
			\item[\textup{ii)}] Also, $\{x_n\}$ is $\mathcal{S}$-$\mathcal{I}^{\mathcal{K}}$-converges to $\xi$ implies $\{x_n\}$ is $\mathcal{S}$-$\mathcal{I} \cup \mathcal{K}$-converges to $\xi$.
			
		\end{itemize}
	\end{proposition}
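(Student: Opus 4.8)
The plan is to push everything down to the level of index sets and then mimic the proof of Proposition~\ref{Lp}, since the semi-openness of the neighbourhoods plays no role in the bookkeeping. For each semi-open $U$ containing $\xi$ put $A_U:=\{n\in\mathbb N:x_n\notin U\}$. With this notation, $\{x_n\}$ is $\mathcal S$-$\mathcal J$-convergent to $\xi$ exactly when $A_U\in\mathcal J$ for every such $U$; it is $\mathcal S$-$\mathcal J^{*}$-convergent to $\xi$ exactly when there is $M\in\mathcal J^{*}$ with $M\cap A_U$ finite for every such $U$; and if $g$ is the sequence built from $\{x_n\}$ and a set $M\in\mathcal I^{*}$ (so $g_n=x_n$ for $n\in M$ and $g_n=\xi$ otherwise), then $\{n:g_n\notin U\}=M\cap A_U$, because $\xi\in U$. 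All three convergence notions in the statement unwind into assertions about such index sets.

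For (i), in the forward direction I would start from a witness $M\in\mathcal I^{*}$ for $\mathcal S$-$\mathcal I^{\mathcal K^{*}}$-convergence together with a witness $M'\in\mathcal K^{*}$ for the $\mathcal S$-$\mathcal K^{*}$-convergence of the associated $g$, so that $M\cap M'\cap A_U=M'\cap\{n:g_n\notin U\}$ is finite for every semi-open $U\ni\xi$, and then set $N:=M\cap M'$. The complement $N^{c}=M^{c}\cup (M')^{c}$ is the union of a set in $\mathcal I$ and a set in $\mathcal K$, hence lies in $\mathcal I\cup\mathcal K$ because that collection is an ideal; thus $N\in(\mathcal I\cup\mathcal K)^{*}$, and $N$ witnesses $\mathcal S$-$(\mathcal I\cup\mathcal K)^{*}$-convergence to $\xi$. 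For the converse I would take a witness $N\in(\mathcal I\cup\mathcal K)^{*}$, observe that $N^{c}\in\mathcal I\cup\mathcal K$ forces $N^{c}\in\mathcal I$ or $N^{c}\in\mathcal K$, and split into the two cases: if $N^{c}\in\mathcal I$, set $M:=N$, so the associated $g$ satisfies $\{n:g_n\notin U\}=N\cap A_U$ finite and is therefore $\mathcal S$-convergent, hence $\mathcal S$-$\mathcal K^{*}$-convergent, to $\xi$; if $N^{c}\in\mathcal K$, set $M:=\mathbb N$, so $g=\{x_n\}$, and use $M':=N\in\mathcal K^{*}$. Either way $\{x_n\}$ is $\mathcal S$-$\mathcal I^{\mathcal K^{*}}$-convergent to $\xi$.

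For (ii), starting from a witness $M\in\mathcal I^{*}$ with $M\cap A_U\in\mathcal K$ for every semi-open $U\ni\xi$, I would write $A_U\subseteq (M\cap A_U)\cup M^{c}$, where $M\cap A_U\in\mathcal K$ and $M^{c}\in\mathcal I$; since $\mathcal I\cup\mathcal K$ is an ideal, the right-hand side, and hence $A_U$, belongs to $\mathcal I\cup\mathcal K$, which is precisely $\mathcal S$-$(\mathcal I\cup\mathcal K)$-convergence to $\xi$. I do not expect a genuine obstacle here; the only points demanding care are that the hypothesis that $\mathcal I\cup\mathcal K$ is an ideal is invoked exactly where an $\mathcal I$-small set must be amalgamated with a $\mathcal K$-small set, and that the padding values $g_n=\xi$ are harmless because $\xi$ belongs to every semi-open set under consideration, so they never contribute to any $A_U$.
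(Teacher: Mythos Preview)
Your proposal is correct and follows essentially the same approach as the paper, which merely refers the reader back to the proof of Proposition~\ref{Lp} with semi-open sets substituted for open sets throughout. You have simply unwound that reference and written out the index-set bookkeeping explicitly; nothing in your argument deviates from what that deferred proof would contain.
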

	
	\begin{proof}
		Readers may refer to the proof of Theorem \ref{Lp} for sequences, considering the semi-opensets over open sets in the arguments.
	\end{proof}
	
	\begin{remark} \label{last}
		{ \rm In a space $X$, we have the observations stated below.}
		\begin{itemize}
			\item[1.] {\rm If $\mathcal{I} = \mathcal{K}$, then $\mathcal{S}$-$\mathcal{I}^\mathcal{K}$-convergence conincide with that of $\mathcal{S}$-$\mathcal{I}$-convergence.}
			\item[2.] {\rm If $\mathcal{K}= Fin$, then $\mathcal{S}$-$\mathcal{I}^\mathcal{K}$-convergence implies $\mathcal{S}$-$\mathcal{I}$-convergence.} 
			\item[3.] {\rm If $\mathcal{I} =\mathcal{K} = Fin$, then $\mathcal{S}$-$\mathcal{I}^\mathcal{K}$-convergence coincides with $\mathcal{S}$-convergence and hence, further implies usual convergence.}
		\end{itemize}
	\end{remark}
	The following example shows that even if $\mathcal{I} =\mathcal{K} = Fin$, usual convergence does not implies \linebreak $\mathcal{S}$-$\mathcal{I}^\mathcal{K}$-convergence.
	\begin{example}
		Let $\mathcal{I}$ and $\mathcal{K}$ be two ideals in $\mathbb{N}$ such that $\mathcal{I} \cup \mathcal{K}$ is an ideal. Let $\mathbb{R}$ be the set of real numbers with usual topology and $\{x_n \}$ be defined as $x_n = (-1)^n (\frac{1}{n})$. Then, $x_n \rightarrow 0$. Consider the semi-open set $U = (-1, 0]$ containing $0$. But $\{n \in \mathbb{N}: x_n \notin U \} = \mathbb{N} \notin \mathcal{I} \cup \mathcal{K}$ for any ideals $\mathcal{I}$ and $\mathcal{K}$. So, $\{x_n \}$ is not $\mathcal{S}$-$\mathcal{I} \cup \mathcal{K}$-convergent to $0$. Thus, $\{x_n \}$ is not $\mathcal{S}$-$\mathcal{I}^\mathcal{K}$-convergent to $0$.
	\end{example}
	
	\begin{lemma} \label{L1}
		 $\mathcal{S}$-$\mathcal{I}^\mathcal{K}$-convergence implies $\mathcal{I}^\mathcal{K}$-convergence, for any proper ideals $\mathcal{I}$ and $\mathcal{K}$.
	\end{lemma}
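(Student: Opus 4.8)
The plan is to observe that every open set is semi-open and then simply restrict the defining condition of $\mathcal{S}$-$\mathcal{I}^\mathcal{K}$-convergence to open neighborhoods. First I would recall that if $U$ is open then, taking the open set in Definition \ref{D1}(1) to be $U$ itself, we have $U \subseteq U \subseteq \overline{U}$, so $U \in SO(X)$; moreover every open neighborhood of $\xi$ is non-empty since it contains $\xi$. Hence the family of non-empty semi-open sets containing $\xi$ includes all open neighborhoods of $\xi$, and the $\mathcal{S}$-$\mathcal{I}^\mathcal{K}$ condition is \emph{a priori} stronger.

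Next I would unwind the two definitions. Suppose $\mathcal{S}$-$\mathcal{I}^\mathcal{K}$-$\lim x_n = \xi$, witnessed by some $M \in \mathcal{I}^\ast$. Viewing the sequence as a function $f : \mathbb{N} \to X$ with $f(n) = x_n$, form the auxiliary function $g$ with $g(n) = x_n$ for $n \in M$ and $g(n) = \xi$ for $n \notin M$, as in the definition of $\mathcal{I}^\mathcal{K}$-convergence. For any open neighborhood $U$ of $\xi$ we have $\xi \in U$, so $\{n \notin M : g(n) \notin U\} = \emptyset$ and therefore $\{n \in \mathbb{N} : g(n) \notin U\} = \{n \in M : x_n \notin U\}$. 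Since $U$ is in particular a non-empty semi-open set containing $\xi$, the choice of $M$ gives $\{n \in M : x_n \notin U\} \in \mathcal{K}$. For an arbitrary (not necessarily open) neighborhood $U$ of $\xi$, pick an open $V$ with $\xi \in V \subseteq U$; then $\{n \in \mathbb{N} : g(n) \notin U\} \subseteq \{n \in \mathbb{N} : g(n) \notin V\} \in \mathcal{K}$, and since $\mathcal{K}$ is closed under subset inclusion the left-hand set lies in $\mathcal{K}$. Thus $g$ is $\mathcal{K}$-convergent to $\xi$, which is exactly the assertion $\mathcal{I}^\mathcal{K}$-$\lim x_n = \xi$.

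The argument uses only that open sets are semi-open and that $\mathcal{K}$ is a genuine ideal (closed under subsets); properness of $\mathcal{I}$ and $\mathcal{K}$ plays no role beyond keeping the notions non-degenerate. There is essentially no obstacle here — the only point that needs a little care is the bookkeeping between the function-valued definition of $\mathcal{I}^\mathcal{K}$-convergence (via the auxiliary $g$) and the set-form condition ``$\{n \in M : x_n \notin U\} \in \mathcal{K}$'', namely verifying that the indices $n \notin M$ contribute nothing because $g$ sends them to $\xi \in U$.
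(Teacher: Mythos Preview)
Your proof is correct and follows the same idea as the paper's: the paper simply states that the result ``follows immediately from the definition of $\mathcal{S}$-$\mathcal{I}^\mathcal{K}$-convergence,'' and your argument is precisely the unpacking of that immediacy---open sets are semi-open, so the $\mathcal{S}$-$\mathcal{I}^\mathcal{K}$ condition with the same witness $M$ specializes to the $\mathcal{I}^\mathcal{K}$ condition. Your additional care in reconciling the auxiliary-function formulation of $\mathcal{I}^\mathcal{K}$-convergence with the set-form condition is a welcome detail the paper omits.
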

	
	\begin{proof}
		This proof follows immediately from the definition of $\mathcal{S}$-$\mathcal{I}^\mathcal{K}$-convergence.
	\end{proof}
	
	\vspace{1mm}
	The following example shows that the converse of Lemma \ref{L1} is not necessarily true.
	\begin{example}
	 Let $\mathcal{I}$ and $\mathcal{K}$ be two ideals in $\mathbb{N}$ such that $\mathcal{I} \cup \mathcal{K}$ is an ideal. Let $[-1, 1]$ be the interval in $\mathbb{R}$, with usual subspace topology and $\{x_n\}$ be a sequence defined as $x_n = (\frac{1}{n}) \sin (\frac{1}{n})$. So, for any open set $U$ containing $0$, we have $\{n \in \mathbb{N}: x_n \notin U \}$ is finite, that implies  $x_n \rightarrow_\mathcal{K} 0$. Hence, $x_n$ is $\mathcal{I}^\mathcal{K}$-convergent to $0$. Now, consider the semi-open set $V= (-1, 0]$ in $[-1, 1]$. Then $\{n \in \mathbb{N}: x_n \notin V \} = \mathbb{N} \notin \mathcal{I} \cup \mathcal{K}$. Hence, $x_n \nrightarrow_{\mathcal{I} \cup \mathcal{K}} 0$. Thus, Theorem \ref{Lp} (ii) suggests that $\{x_n \}$ is not $\mathcal{I}^\mathcal{K}$-convergent to $0$.
	\end{example}
		
	\begin{theorem}
		In a semi-Hausdorff space $X$, each $\mathcal{S}$-$\mathcal{I}^\mathcal{K}$-convergent sequence has unique $\mathcal{S}$-$\mathcal{I}^\mathcal{K}$-limit in $X$, provided $\mathcal{I} \cup \mathcal{K}$ is an ideal.
	\end{theorem}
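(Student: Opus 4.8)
The plan is to argue by contradiction in the usual style for uniqueness of limits. Suppose that a sequence $\{x_n\}$ is $\mathcal{S}$-$\mathcal{I}^\mathcal{K}$-convergent to two distinct points $\xi$ and $\eta$. Since $X$ is semi-Hausdorff, I would first fix disjoint semi-open sets $U$ and $V$ with $\xi \in U$ and $\eta \in V$. Applying the definition of $\mathcal{S}$-$\mathcal{I}^\mathcal{K}$-convergence to each limit separately yields sets $M_1, M_2 \in \mathcal{I}^*$ such that $\{n \in M_1 : x_n \notin U\} \in \mathcal{K}$ and $\{n \in M_2 : x_n \notin V\} \in \mathcal{K}$.

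The key step is to merge these two witnesses into a single index set. Because $\mathcal{I}^* = F(\mathcal{I})$ is a filter, $M := M_1 \cap M_2 \in \mathcal{I}^*$, and the sets $A := \{n \in M : x_n \notin U\}$ and $B := \{n \in M : x_n \notin V\}$ remain in $\mathcal{K}$, being subsets of members of $\mathcal{K}$. Now I would exploit the disjointness: for every $n \in M$ we have $x_n \notin U \cap V = \varnothing$, so $A \cup B = M$; since $\mathcal{K}$ is an ideal, this gives $M \in \mathcal{K}$.

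Finally I would extract the contradiction from the hypothesis that $\mathcal{I} \cup \mathcal{K}$ is an ideal. Since $M \in \mathcal{I}^*$ we have $M^c \in \mathcal{I} \subseteq \mathcal{I} \cup \mathcal{K}$, while $M \in \mathcal{K} \subseteq \mathcal{I} \cup \mathcal{K}$; closure of the ideal $\mathcal{I} \cup \mathcal{K}$ under finite unions then forces $\mathbb{N} = M \cup M^c \in \mathcal{I} \cup \mathcal{K}$, hence $\mathbb{N} \in \mathcal{I}$ or $\mathbb{N} \in \mathcal{K}$. This contradicts the standing assumption that $\mathcal{I}$ and $\mathcal{K}$ are proper (do not contain $\mathbb{N}$), so we must have $\xi = \eta$.

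The only mildly delicate points I anticipate are the observation that disjointness of $U$ and $V$ collapses $A \cup B$ onto all of $M$, and the bookkeeping fact that properness of both $\mathcal{I}$ and $\mathcal{K}$ is exactly what rules out $\mathbb{N} \in \mathcal{I} \cup \mathcal{K}$; beyond these, the argument is routine manipulation with the filter $\mathcal{I}^*$ and the ideal axioms, so I do not expect a substantial obstacle.
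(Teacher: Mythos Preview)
Your proposal is correct and follows essentially the same approach as the paper: separate $\xi$ and $\eta$ by disjoint semi-open sets, intersect the two witnesses $M_1, M_2 \in \mathcal{I}^*$, and use that the resulting $M$ lies in $\mathcal{K}$ while $M^c \in \mathcal{I}$ to force $\mathbb{N} \in \mathcal{I} \cup \mathcal{K}$, contradicting properness. The only cosmetic difference is that the paper carries the set $\{n \in M : x_n \notin U \cap V\}$ through the computation and invokes $U \cap V = \varnothing$ at the very end, whereas you use disjointness immediately to identify $A \cup B$ with $M$.
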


\begin{proof}
	Consider a $\mathcal{S}$-$\mathcal{I}^\mathcal{K}$-convergent sequence $\{x_n\}$ in a semi-Hausdorff space $X$. Suppose that $\{x_n\}$ has two distinct $\mathcal{S}$-$\mathcal{I}^\mathcal{K}$-limits, say $a$ and $b$. Being a semi-Hausdorff space, there exist $U,V \in SO(X)$ with $U \cap V = \phi$ such that $a \in U$, $b \in V$. As  $\{x_n\}$ is $\mathcal{S}$-$\mathcal{I}^\mathcal{K}$-convergent to $a$ and $b$, there exist $M_1, M_2 \in \mathcal{I}^*$ such that $\{n \in M_1 : x_n \notin U \} \in \mathcal{K}$ and $\{n \in M_2 : x_n \notin V \} \in \mathcal{K}$. So, for $M = M_1 \cap M_2 \in \mathcal{I}^*~(\neq \phi)$, the sets $\{n \in M : x_n \notin U \}$ and $\{n \in M : x_n \notin V \}$ belong to $\mathcal{K}$. Now, we have
	\begin{center}
		 $\{n \in M : x_n \notin U \cap V \} = \{n \in M : x_n \notin U \} \cup \{n \in M: x_n \notin V \} \in \mathcal{K}$.
	\end{center}
 Again, 
	\begin{center}
		$\{n \in \mathbb{N} : x_n \notin U \cap V \} \subseteq \{n \in M : x_n \notin U \cap V  \} \cup M^\complement \in \mathcal{I} \cup \mathcal{K}$.
	\end{center} 
 But, $\mathcal{I} \cup \mathcal{K}$ is an ideal, which implies $\{n \in \mathbb{N} : x_n \notin U \cap V \} \neq \mathbb{N}$. Therefore, we conclude that $\{n \in \mathbb{N} : x_n \in U \cap V \} \neq \phi$, which is a contradiction.
\end{proof}
	\begin{corollary}
		In a Hausdorff space $X$, each  $\mathcal{S}$-$\mathcal{I}^\mathcal{K}$-convergent sequence has a unique $\mathcal{S}$-$\mathcal{I}^\mathcal{K}$-limit in $X$, provided $\mathcal{I} \cup \mathcal{K}$ is an ideal.
	\end{corollary}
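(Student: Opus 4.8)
The plan is to derive this corollary directly from the immediately preceding theorem, which already establishes uniqueness of the $\mathcal{S}$-$\mathcal{I}^\mathcal{K}$-limit in any semi-Hausdorff space under the hypothesis that $\mathcal{I} \cup \mathcal{K}$ is an ideal. Thus the only thing that needs to be verified is that every Hausdorff space is semi-Hausdorff, after which the conclusion is immediate.

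First I would recall that every open set $U$ is semi-open: taking the open set $U$ itself as witness, we trivially have $U \subseteq U \subseteq \overline{U}$, so $U \in SO(X)$ by Definition \ref{D1}(1). Consequently $\tau \subseteq SO(X)$ for the topology $\tau$ on $X$. Now let $X$ be Hausdorff and let $x, y \in X$ be distinct. By the Hausdorff property there exist disjoint \emph{open} sets $U \ni x$ and $V \ni y$; since these are in particular semi-open, they witness the semi-Hausdorff property of Definition \ref{D1}(6). Hence $X$ is semi-Hausdorff.

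Having established that $X$ is semi-Hausdorff, I would simply apply the preceding theorem to the space $X$ with the same ideals $\mathcal{I}$ and $\mathcal{K}$: every $\mathcal{S}$-$\mathcal{I}^\mathcal{K}$-convergent sequence in $X$ has a unique $\mathcal{S}$-$\mathcal{I}^\mathcal{K}$-limit, which is exactly the assertion of the corollary. There is essentially no obstacle here — the content is entirely contained in the theorem, and the corollary is a specialization obtained by the inclusion $\tau \subseteq SO(X)$. The only point worth stating explicitly for the reader is precisely that inclusion, so that the implication ``Hausdorff $\Rightarrow$ semi-Hausdorff'' is transparent; everything else is a direct citation of the theorem just proved.
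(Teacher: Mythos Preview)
Your proposal is correct and follows exactly the route the paper intends: the corollary is stated immediately after the semi-Hausdorff theorem with no separate proof, so the implicit argument is precisely the observation that every open set is semi-open, hence Hausdorff implies semi-Hausdorff, and the theorem applies. There is nothing to add.
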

	
	\begin{theorem}
		If $~\mathcal{I}$ and $\mathcal{K}$ be two ideals and $F \subset X$. If there exists an $F$-valued sequence $\{x_n\}$ {\rm (}with distinct elements{\rm )} which is $\mathcal{S}$-$\mathcal{I}^\mathcal{K}$-convergent to $\xi \in X$, then $\xi$ is a semi-limit point of $F$, in essence, $\xi \in sCl(F)$, the semiclosure of $F$.
	\end{theorem}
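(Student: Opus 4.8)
The plan is to read off the conclusion directly from the definition of $\mathcal{S}$-$\mathcal{I}^\mathcal{K}$-convergence together with the point-wise description of the semi-closure in Definition~\ref{D1}(3): a point lies in $sCl(F)$ precisely when every semi-open set containing it meets $F$. So I would fix an arbitrary non-empty semi-open set $U$ with $\xi\in U$ and try to produce an index $n$ with $x_n\in U$; since $\{x_n\}$ is $F$-valued this immediately gives $x_n\in U\cap F$, and with a little extra care (using distinctness of the terms) even $x_n\in U\cap(F\setminus\{\xi\})$.

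By hypothesis there is $M\in\mathcal{I}^*$ such that $A:=\{n\in M:x_n\notin U\}\in\mathcal{K}$, so that $M\setminus A=\{n\in M:x_n\in U\}$. First I would note that $M$ is infinite: from $M\in\mathcal{I}^*$ we get $M^{\complement}\in\mathcal{I}$, and were $M$ finite, admissibility would give $M\in\mathcal{I}$ as well, whence $\mathbb{N}=M\cup M^{\complement}\in\mathcal{I}$, contradicting properness of $\mathcal{I}$. Next I would show that $M\setminus A$ is non-empty, indeed infinite: if $M\setminus A$ were finite, then $M=A\cup(M\setminus A)$ would be the union of a member of $\mathcal{K}$ and a finite set, hence $M\in\mathcal{K}$; combined with $M^{\complement}\in\mathcal{I}$ this puts $\mathbb{N}$ into $\mathcal{I}\cup\mathcal{K}$, which is impossible since $\mathcal{I}\cup\mathcal{K}$ is a proper ideal. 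Hence infinitely many $n\in M$ satisfy $x_n\in U$.

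To finish, since the terms of $\{x_n\}$ are pairwise distinct, at most one index has $x_n=\xi$; picking $n\in M$ with $x_n\in U$ and $x_n\neq\xi$ and recalling $x_n\in F$, we obtain $x_n\in U\cap(F\setminus\{\xi\})$. As $U$ ranged over all semi-open neighbourhoods of $\xi$, Definition~\ref{D1}(3) yields $\xi\in sCl(F)$, that is, $\xi$ is a semi-limit point of $F$.

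I expect the only genuine obstacle to be the step asserting $M\setminus A$ is non-empty: this is exactly where the interaction between $\mathcal{I}^*$ and $\mathcal{K}$ is used, and it leans on $\mathcal{I}\cup\mathcal{K}$ being a proper ideal (as in the companion results above), the remaining steps being routine manipulations of the ideal axioms and of the semi-closure characterization.
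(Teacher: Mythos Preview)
Your argument follows the same line as the paper's: pick an arbitrary semi-open $U\ni\xi$, use the definition of $\mathcal{S}$-$\mathcal{I}^\mathcal{K}$-convergence to obtain $M\in\mathcal{I}^*$ with $\{n\in M:x_n\notin U\}\in\mathcal{K}$, argue that $\{n\in M:x_n\in U\}$ is non-empty, and then exploit distinctness to produce some $x_{n_0}\in U\cap(F\setminus\{\xi\})$. The paper is terser at the key step, asserting only that $\{n\in M:x_n\in U\}\notin\mathcal{K}$ ``since $\mathcal{K}$ is a proper ideal''; your more careful derivation---showing that otherwise $M\in\mathcal{K}$ and $M^{\complement}\in\mathcal{I}$ force $\mathbb{N}\in\mathcal{I}\cup\mathcal{K}$---makes explicit the reliance on $\mathcal{I}\cup\mathcal{K}$ being a proper ideal, which is indeed the standing hypothesis in the surrounding results.
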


	\begin{proof}
	Let $U$ be any semi-open subset of $X$ containing the point $\xi$. Since $\{x_n\}$ is $\mathcal{S}$-$\mathcal{I}^\mathcal{K}$-convergent to $\xi \in X$, so, there exists a set $M \in \mathcal{I}^*$ such that $\{n \in M : x_n \notin U \} \in \mathcal{K}$. In other words, $\{n \in M: x_n \in U \} \notin \mathcal{K}$ since $\mathcal{K}$ is a proper ideal. Then choose $n_0 \in \{n \in M: x_n \in U \}$ such that $x_{n_0} \neq \xi$, then $x_{n_0} \in F \cap ( U - \{\xi \})$ and hence, $F \cap (U - \{\xi \}) \neq \phi$. This shows that $\xi$ is a semi-limit point of $F$.
	\end{proof}
	\begin{corollary}\label{sca}
	Let $~\mathcal{I}$ and $\mathcal{K}$ be two ideals and consider, $F \subset X$. If there exists an $F$-valued sequence $\{x_n\}$ {\rm (}with distinct elements{\rm )} which is $\mathcal{S}$-$\mathcal{I}^\mathcal{K}$-convergent to $\xi \in X$, then $\xi \in Cl(F)$.
	\end{corollary}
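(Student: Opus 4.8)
The plan is to deduce this statement directly from the preceding theorem together with the elementary observation that every open set is semi-open. First I would note that for an open set $U$ the chain of inclusions $U \subseteq U \subseteq \overline{U}$ holds trivially, so $U \in SO(X)$; consequently every closed subset of $X$ is semi-closed. Hence the family of semi-closed sets containing $F$ includes every closed set containing $F$, and intersecting over the larger family can only give a smaller set, so $sCl(F) \subseteq Cl(F)$.

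With this inclusion in hand the corollary is immediate: the hypotheses here are precisely those of the preceding theorem, which yields $\xi \in sCl(F)$, and therefore $\xi \in Cl(F)$.

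Alternatively, one may reproduce the argument of the theorem verbatim with open sets in place of semi-open sets. Let $U$ be any open neighbourhood of $\xi$; since $U \in SO(X)$ and $\{x_n\}$ is $\mathcal{S}$-$\mathcal{I}^\mathcal{K}$-convergent to $\xi$, there is $M \in \mathcal{I}^*$ with $\{n \in M : x_n \notin U\} \in \mathcal{K}$, so that $\{n \in M : x_n \in U\} \notin \mathcal{K}$ because $\mathcal{K}$ is proper, and in particular this set is nonempty. As the terms $x_n$ are distinct, we may choose $n_0$ in this set with $x_{n_0} \neq \xi$, whence $x_{n_0} \in F \cap (U \setminus \{\xi\})$ and $F \cap (U \setminus \{\xi\}) \neq \phi$. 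Thus every open neighbourhood of $\xi$ meets $F \setminus \{\xi\}$, so $\xi$ is a limit point of $F$ and $\xi \in Cl(F)$.

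There is no genuine obstacle in this argument; the only points to keep track of are that properness of $\mathcal{K}$ is what guarantees the index set $\{n \in M : x_n \in U\}$ is nonempty, and that the distinctness of the terms is exactly what lets us avoid the single value $\xi$ when choosing $n_0$ — the same hypotheses under which the theorem it specialises was proved.
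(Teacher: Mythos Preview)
Your proposal is correct and matches the paper's treatment: the corollary is stated there without proof, as an immediate consequence of the preceding theorem via the inclusion $sCl(F)\subseteq Cl(F)$ (equivalently, via the fact that every open set is semi-open), which is precisely your first argument. Your alternative direct argument is also valid and simply specialises the theorem's proof to open neighbourhoods.
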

	
	\begin{theorem}
		If $F \subset X$ is a semi-closed set, then for any sequence in $F$ which is \linebreak $\mathcal{S}$-$\mathcal{I}^\mathcal{K}$-convergent to $a$, we have $a \in F$.
	\end{theorem}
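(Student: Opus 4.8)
The plan is to argue by contradiction. Suppose $\{x_n\}$ is a sequence contained in the semi-closed set $F$ with $\mathcal{S}$-$\mathcal{I}^\mathcal{K}$-$\lim x_n = a$, and suppose toward a contradiction that $a \notin F$. Since $F$ is semi-closed, the complement $U := X \setminus F$ is semi-open, and it is a nonempty semi-open set containing $a$; hence it qualifies as a test set in the definition of $\mathcal{S}$-$\mathcal{I}^\mathcal{K}$-convergence. Thus there is a set $M \in \mathcal{I}^*$ with $\{n \in M : x_n \notin U\} \in \mathcal{K}$. But $x_n \in F$ for every $n$, so $x_n \notin U$ for every $n \in M$; consequently $\{n \in M : x_n \notin U\} = M$, forcing $M \in \mathcal{K}$. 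Since a set $M \in \mathcal{I}^*$ cannot also belong to $\mathcal{K}$ — the same ideal-theoretic fact invoked in the proof of the previous theorem — this is a contradiction, and therefore $a \in F$.

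An equivalent route, tying in with Definition \ref{D1}(3) and the preceding theorem, avoids the explicit contradiction. Let $U$ be an arbitrary nonempty semi-open set containing $a$, and let $M \in \mathcal{I}^*$ witness $\mathcal{S}$-$\mathcal{I}^\mathcal{K}$-convergence, so $\{n \in M : x_n \notin U\} \in \mathcal{K}$. Since $\mathcal{K}$ is a proper ideal and $M \notin \mathcal{K}$, the set $\{n \in M : x_n \in U\}$ is nonempty; choosing such an $n$ gives $x_n \in F \cap U$. Hence every nonempty semi-open set containing $a$ meets $F$, which by Definition \ref{D1}(3) means $a \in sCl(F)$; and $sCl(F) = F$ since $F$ is semi-closed, so $a \in F$. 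Note that, in contrast to the previous theorem, no distinctness assumption on the terms $x_n$ is needed here.

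I expect the only genuinely delicate point to be the ideal-theoretic step: that a set in $\mathcal{I}^*$ cannot lie in $\mathcal{K}$ (equivalently, that $\{n \in M : x_n \in U\}$ is nonempty). This is where the standing hypotheses on the pair $\mathcal{I}, \mathcal{K}$ (properness and admissibility, and — in the spirit of the results above — that $\mathcal{I} \cup \mathcal{K}$ be an ideal) come into play, and it is exactly the step already used in the proof of the preceding theorem. Everything else is a direct unwinding of the definitions of semi-closed set and of $\mathcal{S}$-$\mathcal{I}^\mathcal{K}$-convergence.
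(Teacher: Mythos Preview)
Your proof is correct and matches the paper's argument almost exactly. The paper also proceeds by contradiction: assuming $a\notin F=sCl(F)$, it picks a semi-open $U\ni a$ disjoint from $F$, uses the defining $M\in\mathcal{I}^*$ to get $\{n\in M:x_n\notin U\}\in\mathcal{K}$, then argues $\{n\in M:x_n\in U\}\notin\mathcal{K}$ (hence nonempty), contradicting $F\cap U=\emptyset$. Your second route is precisely this argument phrased directly rather than contrapositively; your first route is a minor streamlining that takes $U=X\setminus F$ explicitly and observes $\{n\in M:x_n\notin U\}=M$, collapsing the contradiction to ``$M\in\mathcal{K}$''. You are also right that the only nontrivial step is the ideal-theoretic one ($M\in\mathcal{I}^*\Rightarrow M\notin\mathcal{K}$), which the paper invokes in the same way as in its preceding theorem and which indeed rests on the standing assumption that $\mathcal{I}\cup\mathcal{K}$ is a proper ideal.
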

	\begin{proof}
		Suppose that $F \subset X$ is a semi-closed set and $\{x_n\}$ is any sequence in $F$ which is \linebreak $\mathcal{S}$-$\mathcal{I}^\mathcal{K}$-convergent to the point $a$, but $a \notin F$. As $F$ is semi-closed, we have $sCl(F) = F$ and thus, $a \notin sCl(F)$. Then there exists a semi-open set $U$ containing $a$ such that $F \cap U \neq \phi$. As we assume, $\{x_n \}$ is $\mathcal{S}$-$\mathcal{I}^\mathcal{K}$-convergent to $a$, there exist $M \in \mathcal{I}^*$ such that $\{n \in M: x_n \notin U \} \in \mathcal{K}$. Further $\{n \in M: x_n \in U \} \notin \mathcal{K}$, which implies that $\{n \in M : x_n \in U \} \neq \phi$. By our hypothesis, $x_n \in F$, that implies $F \cap U \neq \phi$, which is a contradiction.
	\end{proof} 

	\begin{theorem}\label{scp}
		Let $f: X \rightarrow Y$ be a semi-continuous function. If $\{x_n \}$ is a sequence in $X$ which is $\mathcal{S}$-$\mathcal{I}^\mathcal{K}$-convergent to $\xi \in X$, then $\{f(x_n) \}$ is an $\mathcal{I}^\mathcal{K}$-convergent sequence to $f(\xi)$. 
	\end{theorem}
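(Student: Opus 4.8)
The plan is to show that the very same witnessing set $M \in \mathcal{I}^*$ that certifies the $\mathcal{S}$-$\mathcal{I}^\mathcal{K}$-convergence of $\{x_n\}$ to $\xi$ also certifies the $\mathcal{I}^\mathcal{K}$-convergence of $\{f(x_n)\}$ to $f(\xi)$. First I would unwind the definition of $\mathcal{I}^\mathcal{K}$-convergence for the sequence $\{f(x_n)\}$: applying the defining function $g$ with $g(n) = f(x_n)$ for $n \in M$ and $g(n) = f(\xi)$ otherwise, and noting that $f(\xi)$ lies in every neighbourhood of $f(\xi)$, one sees that $\{f(x_n)\}$ is $\mathcal{I}^\mathcal{K}$-convergent to $f(\xi)$ precisely when there is some $M \in \mathcal{I}^*$ with $\{n \in M : f(x_n) \notin V\} \in \mathcal{K}$ for every open $V \subseteq Y$ containing $f(\xi)$.

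Next I would fix such an $M \in \mathcal{I}^*$ coming from the hypothesis on $\{x_n\}$, take an arbitrary open set $V \subseteq Y$ with $f(\xi) \in V$, and invoke semi-continuity of $f$ in the form of Theorem \ref{sc}: there exists $U \in SO(X)$ with $\xi \in U$ and $f(U) \subseteq V$. This $U$ is a nonempty semi-open set containing $\xi$, so the $\mathcal{S}$-$\mathcal{I}^\mathcal{K}$-convergence hypothesis gives $\{n \in M : x_n \notin U\} \in \mathcal{K}$.

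The key set-theoretic observation is then the inclusion
\[
\{n \in M : f(x_n) \notin V\} \subseteq \{n \in M : x_n \notin U\},
\]
which holds because $x_n \in U$ forces $f(x_n) \in f(U) \subseteq V$. Since $\mathcal{K}$ is an ideal (hence closed under subsets), the left-hand set belongs to $\mathcal{K}$. As $V$ was an arbitrary open neighbourhood of $f(\xi)$, this establishes $\mathcal{I}^\mathcal{K}$-$\lim f(x_n) = f(\xi)$.

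There is no serious obstacle here; the only point requiring a little care is the translation of the ``function $g$'' formulation of $\mathcal{I}^\mathcal{K}$-convergence into the membership condition ``$\{n \in M : y_n \notin V\} \in \mathcal{K}$ for all open $V \ni y$'', and the observation that Theorem \ref{sc} outputs a semi-open set that is automatically nonempty (it contains $\xi$), so that the semi-convergence hypothesis is applicable to it. Everything else is monotonicity of $\mathcal{K}$ under inclusion.
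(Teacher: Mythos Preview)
Your proof is correct and follows essentially the same approach as the paper: both invoke Theorem~\ref{sc} to pull back an open neighbourhood $V$ of $f(\xi)$ to a semi-open $U \ni \xi$ with $f(U)\subseteq V$, and both use the key inclusion $\{n\in M: f(x_n)\notin V\}\subseteq\{n\in M: x_n\notin U\}$. The only difference is cosmetic: you argue directly with the witnessing set $M$, while the paper phrases the same computation as a proof by contradiction.
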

		
	\begin{proof}
		Consider a sequence $\{x_n \}$ in $X$ which is $\mathcal{S}$-$\mathcal{I}^\mathcal{K}$-convergent to $\xi \in X$.  We claim that $\{f(x_n) \}$ is $\mathcal{S}$-$\mathcal{I}^\mathcal{K}$-convergent to $f(\xi)$. Suppose not, that is $\{f(x_n)\}$ is not $\mathcal{S}$-$\mathcal{I}^\mathcal{K}$-convergent to $f(\xi)$. Then there exists a open set $V \in Y$, containing $f(\xi)$ and for each $M \in \mathcal{I}^*$, we have $\{n \in M : f(x_n) \notin V \} \notin \mathcal{K}$. Now by Theorem \ref{sc}, there exist $U \in SO(X)$ such that $f(U) \subset V$. Now, 
		\begin{center}
			$\{n \in M : f(x_n) \notin V \} \subset \{n \in M : x_n \notin U \}$.
		\end{center}
		Then $\{n \in M : x_n \notin U \} \notin \mathcal{K}$, that implies $\{x_n\}$ is not $\mathcal{S}$-$\mathcal{I}^\mathcal{K}$-convergent to $\xi$. That is a contradiction to our assumption. Hence, $\{f(x_n) \}$ is $\mathcal{S}$-$\mathcal{I}^\mathcal{K}$-convergent to $f(\xi)$.
	\end{proof}

	\begin{theorem}
		Let $f: X \rightarrow Y$ be an irresolute function. If $\{x_n \}$ is a sequence in $X$ which is \linebreak $\mathcal{S}$-$\mathcal{I}^\mathcal{K}$-convergent to $\xi \in X$, then $\{f(x_n) \}$ $\mathcal{S}$-$\mathcal{I}^\mathcal{K}$-converges to $f(\xi)$.
	\end{theorem}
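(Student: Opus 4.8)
The plan is to follow the template of the proof of Theorem~\ref{scp}, simply replacing the characterization of semi-continuity (Theorem~\ref{sc}) by the analogous characterization of irresolute maps (Theorem~\ref{ir}). Accordingly, I would argue by contradiction: assume $\{f(x_n)\}$ is \emph{not} $\mathcal{S}$-$\mathcal{I}^\mathcal{K}$-convergent to $f(\xi)$. Unwinding the definition, this produces a fixed nonempty $V \in SO(Y)$ with $f(\xi) \in V$ such that for \emph{every} $M \in \mathcal{I}^*$ one has $\{n \in M : f(x_n) \notin V\} \notin \mathcal{K}$.

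Next, since $f$ is irresolute and $V \in SO(Y)$ contains $f(\xi)$, Theorem~\ref{ir} yields a set $U \in SO(X)$ with $\xi \in U$ and $f(U) \subseteq V$. I would then record the elementary set inclusion
\[
\{n \in M : f(x_n) \notin V\} \subseteq \{n \in M : x_n \notin U\},
\]
valid for every $M$, because $x_n \in U$ forces $f(x_n) \in f(U) \subseteq V$. Since $\mathcal{K}$ is an ideal and hence downward closed, any set containing a non-$\mathcal{K}$ set is itself not in $\mathcal{K}$; thus $\{n \in M : x_n \notin U\} \notin \mathcal{K}$ for every $M \in \mathcal{I}^*$. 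But $U$ is a nonempty semi-open neighborhood of $\xi$, so this contradicts the $\mathcal{S}$-$\mathcal{I}^\mathcal{K}$-convergence of $\{x_n\}$ to $\xi$, and we conclude that $\{f(x_n)\}$ is $\mathcal{S}$-$\mathcal{I}^\mathcal{K}$-convergent to $f(\xi)$.

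As for difficulties, there is essentially no obstacle: irresoluteness is precisely the hypothesis needed so that a semi-open neighborhood of $f(\xi)$ pulls back to a semi-open neighborhood of $\xi$, which is exactly the place where the semi-continuous argument of Theorem~\ref{scp} would break down (a semi-continuous $f$ only controls \emph{open} $V$). The one point to handle carefully is the quantifier on the witnessing set $M \in \mathcal{I}^*$: the negation of $\mathcal{S}$-$\mathcal{I}^\mathcal{K}$-convergence supplies a single bad semi-open $V$ that defeats all choices of $M$, and the displayed inclusion transfers this failure to $U$ uniformly in $M$, so the contradiction is immediate. I would also remark that, unlike Theorem~\ref{scp} whose conclusion only asserts $\mathcal{I}^\mathcal{K}$-convergence of $\{f(x_n)\}$, the irresolute hypothesis here delivers the stronger $\mathcal{S}$-$\mathcal{I}^\mathcal{K}$-convergence, from which $\mathcal{I}^\mathcal{K}$-convergence of $\{f(x_n)\}$ follows as a corollary via Lemma~\ref{L1}.
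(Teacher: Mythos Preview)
Your approach is exactly the paper's: the paper's proof consists of a single sentence saying to repeat the argument of Theorem~\ref{scp} with Theorem~\ref{ir} in place of Theorem~\ref{sc}, and that is precisely what you do.

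One caveat, however: your explicit claim that ``the negation of $\mathcal{S}$-$\mathcal{I}^\mathcal{K}$-convergence supplies a single bad semi-open $V$ that defeats all choices of $M$'' is a quantifier slip. The definition is $\exists M\,\forall V\,(\cdots)$, so its negation is $\forall M\,\exists V\,(\cdots)$; the bad $V$ may depend on $M$. The argument is easily repaired: take the specific $M_0\in\mathcal{I}^*$ that witnesses the $\mathcal{S}$-$\mathcal{I}^\mathcal{K}$-convergence of $\{x_n\}$ to $\xi$, apply the (correctly ordered) negation to that $M_0$ to obtain a semi-open $V$ with $\{n\in M_0: f(x_n)\notin V\}\notin\mathcal{K}$, pull back via Theorem~\ref{ir} to a semi-open $U\ni\xi$, and use your displayed inclusion to contradict the defining property of $M_0$. (The paper's own proof of Theorem~\ref{scp} is phrased with the same quantifier inversion, so you are in good company; but since you singled out this point as ``the one point to handle carefully'', it is worth getting right.) Alternatively, a direct proof avoids the issue entirely: the same $M_0$ works for $\{f(x_n)\}$, because for any semi-open $V\ni f(\xi)$ the pullback $U$ gives $\{n\in M_0: f(x_n)\notin V\}\subseteq\{n\in M_0: x_n\notin U\}\in\mathcal{K}$.
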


	\begin{proof}
		The proof is similar to that of Theorem \ref{scp} with the use of the characterization of an irresolute function that is shown in Theorem \ref{ir}. 
	\end{proof}
	
\section{$\mathcal{S}$-$\mathcal{I}^\mathcal{K}$-cluster points and compactness}

	 In this section, we introduce the terminology of $\mathcal{I}^\mathcal{K}$-cluster point of a sequence for semi open sets in a topological space. We say a point $p$ in a space $X$ is said to be $\mathcal{I}^*$-cluster point of sequence in $X$ if there exists $M =\{ m_1, m_2,..., m_k,...\}\in \mathcal{I}^*$  such that the subsequence $\{x_{m_k}\}$ has a cluster point $p$, elaborately, for any open set $U$ containing $p$, the set $\{n \in \mathbb{N} : x_{m_k} \in U \}$ is infinite subset of $\mathbb{N}$.
	
	\begin{definition}\label{SIKC}
		{\rm Let $X$ be a space and $\{x_n\}$ be a sequence in $X$. A point $p \in X$ is called a }  $\mathcal{S}$-$\mathcal{I}^\mathcal{K}$-cluster point {\rm of $\{x_n\}$ in $X$ if there exists $M \in \mathcal{I}^*$ such that for any $U \in SO(X)$ containing $p$,  we have $\{n \in M : x_n \in U \} \notin \mathcal{K}$.}
	\end{definition}
	
	\begin{remark}\label{ikcr}
		{\rm Let $X$ be a space and $\{x_n \}$ be a $X$-valued sequence. Then}
		\begin{itemize}
			\item[1.] {\rm If $\mathcal{I} = \mathcal{K}$, then $\mathcal{S}$-$\mathcal{I}^\mathcal{K}$-cluster points of $\{x_n \}$ conincide with that of $\mathcal{S}$-$\mathcal{I}$-cluster points.}
			\item[2.] {\rm If $\mathcal{K}= Fin$, then $\mathcal{S}$-$\mathcal{I}^\mathcal{K}$-cluster points of $\{x_n \}$ implies that of $\mathcal{S}$-$\mathcal{I}$-cluster points.}
		\end{itemize}
	\end{remark}
	
	We denote the collection of all $\mathcal{S}$-$\mathcal{I}^\mathcal{K}$-cluster points of a sequence $x=\{x_n\}$ in a space by $sC_x(\mathcal{I}^\mathcal{K})$. Consequently, from Definition \ref{SIKC}, it straightaway follows that $sC_x(\mathcal{I}^\mathcal{K}) \subseteq sC_x(\mathcal{K})$. 
	
	\begin{lemma}\label{lem}
		$sC_x(\mathcal{I}\cup \mathcal{K}) \subseteq sC_x(\mathcal{I}^\mathcal{K})$, for two ideals $\mathcal{I}$ and $\mathcal{K}$ such that $\mathcal{I} \cup \mathcal{K}$ is an ideal.
	\end{lemma}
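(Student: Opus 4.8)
The plan is to verify the inclusion directly from the definitions, with $M:=\mathbb{N}$ serving as a universal witness. Fix $p\in sC_x(\mathcal{I}\cup\mathcal{K})$. Since $\mathcal{I}\cup\mathcal{K}$ is assumed to be an ideal, $sC_x(\mathcal{I}\cup\mathcal{K})$ is the set of $\mathcal{S}$-$(\mathcal{I}\cup\mathcal{K})$-cluster points in the single-ideal sense; by the equivalence underlying Remark \ref{ikcr} (take $\mathcal{I}=\mathcal{K}=\mathcal{I}\cup\mathcal{K}$ in Definition \ref{SIKC} and use that $\mathbb{N}\in(\mathcal{I}\cup\mathcal{K})^{*}$ together with downward closure), the statement $p\in sC_x(\mathcal{I}\cup\mathcal{K})$ unwinds to: for every $U\in SO(X)$ with $p\in U$, the set $\{n\in\mathbb{N}:x_n\in U\}\notin\mathcal{I}\cup\mathcal{K}$.

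First I would note that $M:=\mathbb{N}$ belongs to $\mathcal{I}^{*}$, because $\emptyset=\mathbb{N}^{\complement}\in\mathcal{I}$. Next, for an arbitrary $U\in SO(X)$ with $p\in U$, I would observe that $\{n\in M:x_n\in U\}=\{n\in\mathbb{N}:x_n\in U\}\notin\mathcal{I}\cup\mathcal{K}$, and since $\mathcal{K}\subseteq\mathcal{I}\cup\mathcal{K}$, this forces $\{n\in M:x_n\in U\}\notin\mathcal{K}$. As $U$ was an arbitrary semi-open set containing $p$, the set $M$ witnesses that $p\in sC_x(\mathcal{I}^\mathcal{K})$ by Definition \ref{SIKC}, which completes the argument.

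There is essentially no serious obstacle here; the only point requiring care is the bookkeeping between the two cluster-point conventions. One should make explicit that an $\mathcal{S}$-$(\mathcal{I}\cup\mathcal{K})$-cluster point, described via some $M'\in(\mathcal{I}\cup\mathcal{K})^{*}$ with $\{n\in M':x_n\in U\}\notin\mathcal{I}\cup\mathcal{K}$ for all semi-open $U\ni p$, can be enlarged: replacing $M'$ by $\mathbb{N}\in\mathcal{I}^{*}$ preserves the defining property because $\{n\in M':x_n\in U\}\subseteq\{n\in\mathbb{N}:x_n\in U\}$ and $\mathcal{I}\cup\mathcal{K}$ is closed under subsets. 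Combining the result with the inclusion $sC_x(\mathcal{I}^\mathcal{K})\subseteq sC_x(\mathcal{K})$ already noted before the lemma, one obtains the chain $sC_x(\mathcal{I}\cup\mathcal{K})\subseteq sC_x(\mathcal{I}^\mathcal{K})\subseteq sC_x(\mathcal{K})$, which may be worth recording as a corollary.
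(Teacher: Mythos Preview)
Your argument is correct. You prove the inclusion directly by choosing the single witness $M=\mathbb{N}\in\mathcal{I}^{*}$ and using that $\mathcal{K}\subseteq\mathcal{I}\cup\mathcal{K}$, so that $\{n\in\mathbb{N}:x_n\in U\}\notin\mathcal{I}\cup\mathcal{K}$ immediately gives $\{n\in M:x_n\in U\}\notin\mathcal{K}$. The paper instead argues by contrapositive: starting from a point $y\notin sC_x(\mathcal{I}^\mathcal{K})$, it picks any $M\in\mathcal{I}^{*}$, obtains a semi-open $U\ni y$ with $\{n\in M:x_n\in U\}\in\mathcal{K}$, and then uses the decomposition $\{n\in\mathbb{N}:x_n\in U\}\subseteq\{n\in M:x_n\in U\}\cup M^{\complement}\in\mathcal{I}\cup\mathcal{K}$ to conclude $y\notin sC_x(\mathcal{I}\cup\mathcal{K})$. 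The two routes are dual and equally short; your version has the slight advantage of making the role of the hypothesis ``$\mathcal{I}\cup\mathcal{K}$ is an ideal'' more transparent (it is needed only so that $sC_x(\mathcal{I}\cup\mathcal{K})$ is defined and downward closure applies), and your explicit discussion of why one may pass from an arbitrary $M'\in(\mathcal{I}\cup\mathcal{K})^{*}$ to $M=\mathbb{N}$ fills in a step the paper leaves implicit.
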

	
	\begin{proof}
		Let $y$ be not a $\mathcal{S}$-$\mathcal{I}^\mathcal{K}$-cluster point of $x= \{x_n\}$. Then  for all $M \in \mathcal{I}^*$ such that \linebreak $\{n \in M: x_n \in U \} \in \mathcal{K}$. Hence, $\{n \in \mathbb{N} : x_n \in U \} \subseteq \{n \in M: x_n \in U \} \cup M^\complement \in \mathcal{I} \cup \mathcal{K}$. Hence, $y$ is not a $\mathcal{S}$-$(\mathcal{I} \cup \mathcal{K})$-cluster point of $x$.
	\end{proof}
	
	\vspace{1mm}
	\noindent Recall that a subset $D$ of a topological space $X$ is said to be dense in $X$ if for any open set $U$, $U \cap D \neq \phi$. In an arbitrary space, the notion of dense set for semi-open sets is equivalent to that of open sets. 
	
	\begin{theorem} {\rm{\cite[Theorem 2.4]{M11}}} \label{DSD}
		Let $X$ be a space and $D \subset X$. Then $D$ is dense in $X$ if and only if $U \cap D \neq \phi$ for every $U \in SO(X)$.
	\end{theorem}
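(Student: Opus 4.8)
The plan is to prove the two implications separately, using only the elementary fact that every open set is semi-open together with the structural description of semi-open sets in Definition \ref{D1}(1). Throughout I read ``every $U \in SO(X)$'' as ``every nonempty $U \in SO(X)$'', consistent with the definition of density as the condition $U \cap D \neq \phi$ for every nonempty open $U$ (the empty set being vacuously semi-open).

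For the direction that the semi-open condition implies density, suppose $U \cap D \neq \phi$ for every nonempty $U \in SO(X)$. Every nonempty open set $W$ is semi-open, since $W \subset W \subset \overline{W}$ exhibits $W$ as squeezed between an open set and its closure in the sense of Definition \ref{D1}(1). Hence $W \cap D \neq \phi$ for every nonempty open $W$, which is precisely the statement that $D$ is dense in $X$.

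For the converse, assume $D$ is dense and let $U \in SO(X)$ be nonempty. By Definition \ref{D1}(1) there is an open set $V$ with $V \subset U \subset \overline{V}$. The key observation is that $V$ cannot be empty: if $V = \phi$ then $\overline{V} = \phi$, which would force $U = \phi$, contrary to our choice of $U$. Thus $V$ is a nonempty open set, so density of $D$ gives $V \cap D \neq \phi$, and since $V \subset U$ this yields $U \cap D \neq \phi$, as required.

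There is no substantive obstacle here; the only point needing a moment's care is the remark that a nonempty semi-open set always contains a nonempty open set (namely the witnessing open set $V$), which is exactly what transfers the density condition from open sets to semi-open sets.
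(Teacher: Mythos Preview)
Your argument is correct. The paper does not supply its own proof of this theorem; it is quoted verbatim from \cite{M11} and used as a tool later on. Your proof is the standard elementary one: the backward implication is immediate because open sets are semi-open, and the forward implication hinges on the single observation that a nonempty semi-open set must contain a nonempty open set (the witnessing $V$ in Definition~\ref{D1}(1), since $V=\phi$ would force $\overline{V}=\phi$ and hence $U=\phi$). There is nothing to add; your handling of the ``nonempty'' convention is also appropriate.
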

	
	\vspace{1mm}
	\noindent Let $X$ be an arbitrary space. Then for a sequence $x=\{x_n\}$ in $X$, we characterize the set $sC_x(\mathcal{I}^\mathcal{K})$ as a semi-closed subsets of $X$. We say, $X$ is a semi-closed hereditarily separable space if every semi-closed subsets of $X$ is separable.
	
	\vspace{1mm}
	\begin{theorem}\label{11}	
		Let $\mathcal{I}$, $\mathcal{K}$ be two ideals on $\mathbb{N}$ and $X$ be a space. Then
		\begin{itemize}
			\item[\textup{(i)}]  For $x=\{x_n\}_{n \in \mathbb{N}}$, a sequence in $X$; $sC_x(\mathcal{I}^\mathcal{K})$ is a semi-closed set.
			\item[\textup{(ii)}] If $(X, \tau)$ is semi-closed hereditarily separable and there exists a disjoint sequence of sets $\{D_n\}$ such that $D_n \subset \mathbb{N}$, $D_n \notin \mathcal{I}, \mathcal{K}$ for all $n$, then for every non empty semi-closed subset $F$ of $X$, there exists a sequence $x$ in $X$ such that $F = sC_x(\mathcal{I}^\mathcal{K})$ provided $\mathcal{I} \cup \mathcal{K}$ is an ideal.
		\end{itemize}
	\end{theorem}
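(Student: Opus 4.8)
I would treat the two parts by quite different methods. For part~(i), I would prove $sCl(sC_x(\mathcal{I}^\mathcal{K}))\subseteq sC_x(\mathcal{I}^\mathcal{K})$, which by Definition~\ref{D1}(3) shows $sC_x(\mathcal{I}^\mathcal{K})$ is semi-closed. Suppose $y\in sCl(sC_x(\mathcal{I}^\mathcal{K}))$ but $y\notin sC_x(\mathcal{I}^\mathcal{K})$. Negating Definition~\ref{SIKC} and using the particular witness $M=\mathbb{N}\in\mathcal{I}^*$, there is a semi-open $U\ni y$ with $\{n\in\mathbb{N}:x_n\in U\}\in\mathcal{K}$. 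Then for every $z\in U$ and every $M\in\mathcal{I}^*$ we have $\{n\in M:x_n\in U\}\subseteq\{n\in\mathbb{N}:x_n\in U\}\in\mathcal{K}$, so the single semi-open neighbourhood $U$ of $z$ already defeats every candidate witness, whence $z\notin sC_x(\mathcal{I}^\mathcal{K})$; thus $U\cap sC_x(\mathcal{I}^\mathcal{K})=\emptyset$. But $U$ is a semi-open neighbourhood of $y\in sCl(sC_x(\mathcal{I}^\mathcal{K}))$ and so must meet $sC_x(\mathcal{I}^\mathcal{K})$ --- a contradiction. (Incidentally this shows $M=\mathbb{N}$ is always an admissible witness, which streamlines part~(ii).)

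For part~(ii): since $F$ is semi-closed it is separable, so fix a countable $\{y_k:k\in\mathbb{N}\}\subseteq F$ dense in $F$ (with repetitions if $F$ is finite), and define $x=\{x_n\}$ by $x_n=y_k$ for $n\in D_k$ and $x_n=y_1$ for $n\notin\bigcup_k D_k$; thus $x_n\in F$ for all $n$. The inclusion $sC_x(\mathcal{I}^\mathcal{K})\subseteq F$ is immediate: for $p\notin F$ the set $X\setminus F\in SO(X)$ is a semi-open neighbourhood of $p$ with $\{n\in M:x_n\in X\setminus F\}=\emptyset\in\mathcal{K}$ for every $M\in\mathcal{I}^*$, so $p\notin sC_x(\mathcal{I}^\mathcal{K})$. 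For the reverse inclusion, take $p\in F$ and $U\in SO(X)$ with $p\in U$; if $U$ contains some $y_{k_0}$, then $\{n\in\mathbb{N}:x_n\in U\}\supseteq D_{k_0}\notin\mathcal{K}$, and taking $M=\mathbb{N}$ makes Definition~\ref{SIKC} hold at $p$, so $p\in sC_x(\mathcal{I}^\mathcal{K})$. Putting the inclusions together yields $F=sC_x(\mathcal{I}^\mathcal{K})$; the hypotheses that the $D_n$ avoid $\mathcal{I}$ and that $\mathcal{I}\cup\mathcal{K}$ is an ideal serve as safeguards for the index-set bookkeeping (cf.\ Lemma~\ref{lem}).

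The crux, and the step I expect to be the main obstacle, is the claim in the second inclusion that every $U\in SO(X)$ meeting $F$ also meets $\{y_k\}$. Density of $\{y_k\}$ in the subspace $F$ gives, by Theorem~\ref{DSD} applied inside $F$, only that $\{y_k\}$ meets every nonempty member of $SO(F)$; but the trace $U\cap F$ of an ambient semi-open set need not be semi-open in $F$ (for instance $[1,2)\in SO(\mathbb{R})$ while $[1,2)\cap[0,1]=\{1\}\notin SO([0,1])$). One therefore has to split on whether $p\in\operatorname{int}(U)$: if so, $\operatorname{int}(U)\cap F$ is a nonempty relatively open subset of $F$ and hence meets $\{y_k\}$; if not, $\operatorname{int}(U)$ lies in $X\setminus\overline{F}$ and accumulates at the boundary point $p$, and here one must exploit semi-closed hereditary separability to choose the countable set $\{y_k\}$ so that it is actually ``semi-dense'' in $F$ relative to the ambient semi-open sets touching $F$. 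Nailing down this density statement, rather than the arithmetic with $M$ and the $D_n$, is the real work of the proof.
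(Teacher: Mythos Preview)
Your approach to both parts is essentially the paper's. For~(i) you are in fact more careful than the paper about quantifier order: the paper picks $q\in U\cap sC_x(\mathcal{I}^\mathcal{K})$ and invokes the witness $M$ attached to $q$, which a~priori depends on $U$; your observation that $M=\mathbb{N}$ is always an admissible witness resolves this and is not made explicit in the paper. For~(ii) your containment $sC_x(\mathcal{I}^\mathcal{K})\subseteq F$ via the single semi-open set $X\setminus F$ is also more direct than the paper's route through $sC_x(\mathcal{K})$.

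Regarding the density step you flag as the ``real work'': the paper does not engage with this subtlety at all. It simply asserts, citing Definition~\ref{D1}(3) and Theorem~\ref{DSD}, that the countable dense set $S\subseteq F$ satisfies $sCl(S)=F$, and then uses this verbatim: for $a\in F$ and $U$ semi-open with $a\in U$ it writes ``there exists $s_i\in S$ such that $s_i\in U$'' without any case split on $\operatorname{int}(U)$ or any discussion of whether $U\cap F\in SO(F)$. So the obstacle you have identified is genuine, but the paper treats $sCl_X(S)=F$ as an immediate consequence of separability plus Theorem~\ref{DSD} and moves on. Granting that step, the remainder of your argument and the paper's coincide; the only structural difference is that the paper routes $F\subseteq sC_x(\mathcal{I}\cup\mathcal{K})\subseteq sC_x(\mathcal{I}^\mathcal{K})$ via Lemma~\ref{lem} (this is where $D_i\notin\mathcal{I}$ and the hypothesis that $\mathcal{I}\cup\mathcal{K}$ is an ideal are actually used), whereas your $M=\mathbb{N}$ shortcut makes that detour unnecessary.
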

	\begin{proof}
		Consider the sequence $x=\{x_n\}$ in $X$ and $\mathcal{I}$, $\mathcal{K}$ be the two ideals on $\mathbb{N}$.
		\begin{itemize}
			\item[\textup{(i)}] Let $y \in  sCl(C_x(\mathcal{I}^\mathcal{K}))$; the semi-derived set of $C_x(\mathcal{I}^\mathcal{K})$. Let $U$ be an semi-open set containing $y$. It is clear that $U \cap C_x(\mathcal{I}^\mathcal{K}) \neq \phi $. Let $q \in (U \cap C_x(\mathcal{I}^\mathcal{K}))$ i.e., $q \in U$ and $q \in C_x(\mathcal{I}^\mathcal{K})$. Now there exist a set $M \in \mathcal{I}^*$, we have $\{n \in M : y_n \in U\} \notin \mathcal{K}$. Thus, $y \in C_x(\mathcal{I}^\mathcal{K})$.
			\vspace{1mm}
			\item[\textup{(ii)}] $F$ is separable as a semi-closed subset of $X$. Then by Definition [\ref{D1} (3)] and Theorem \ref{DSD}, there exists a countable set $S=\{s_1, s_2,...\} \subset F$ such that $sCl(S)=F$. Consider $x_n = s_i$ for $n \in D_i$. Then, we have a subsequence $\{k_n\}$ of the sequence $\{n\}$. Now, consider the sequence $x=\{x_{n_k}\}$ and let $y\in sC_x(\mathcal{K})$ (taking $y \neq s_i$ otherwise if $y= s_i$ for some $i$, then $y$ is eventually in $F$). We claim that $sC_x(\mathcal{I}^\mathcal{K}) \subset F$. Let $U$ be any semi-open set containing $y$. Then $\{n: x_{n_k} \in U\} \notin \mathcal{K}$ $(\neq \phi)$. So, $s_i \in U$ for some $i$. Therefore, $F \cap U$ is non empty, So $y$ is a semi-limit point of $F$ and semi-closedness of $F$ implies $y \in F$. Hence $sC_x(\mathcal{K}) \subset F$. Further $sC_x(\mathcal{I}^\mathcal{K}) \subseteq  sC_x(\mathcal{K}) \subset F$. For the converse argument, consider $a \in F$ and $U$ be a semi-open set containing $a$, then there exists $s_i \in S$ such that $s_i \in U$. Then $\{n: x_{n_k} \in U\} \supset D_i$ ($\notin \mathcal{K}$, $\mathcal{I}$). Thus $\{n: x_{n_k} \in U\} \notin (\mathcal{I}\cup \mathcal{K})$ i.e., $a \in sC_x(\mathcal{I} \cup \mathcal{K})$. On the otherhand, by lemma \ref{lem}, $sC_f(\mathcal{I} \cup \mathcal{K}) \subseteq sC_f(\mathcal{I}^\mathcal{K})$. Thus, we get the reverse implication. 
		\end{itemize}
	\end{proof}
	
	\begin{remark}
		Theorem \ref{11} extends Theorem 10 in {\rm \cite{LD05}} to semi-open sets, it follows by letting $\mathcal{I}$, $\mathcal{K}$ coincide and using open sets instead of semi-open sets in the above theorem.
	\end{remark}
	\vspace{1mm}
	\begin{theorem}
		Let $X$ be a space and $\mathcal{I}$, $\mathcal{K}$ be two ideals. If every sequence $\{x_n\}$ in $X$ has a $\mathcal{S}$-$\mathcal{I}^\mathcal{K}$-cluster point, then every infinite subset of $X$ has a semi-$\omega$-accumulation point. The converse is also true if $\mathcal{I}$, $\mathcal{K}$ doesnot contain any infinite sets.
	\end{theorem}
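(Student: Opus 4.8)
The plan is to follow the template of Theorem~\ref{clap}, the $\mathcal{S}$-$\mathcal{I}$-version of Guevara et al., carrying the auxiliary ideal $\mathcal{K}$ through the argument and using repeatedly that both $\mathcal{I}$ and $\mathcal{K}$ are admissible: for an admissible ideal, a set that fails to belong to it cannot be finite, and $\mathbb{N}$ always lies in $\mathcal{I}^{*}$.

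For the forward implication I would argue as follows. Let $A\subseteq X$ be infinite and choose a sequence $\{x_n\}$ of \emph{distinct} points of $A$. By hypothesis $\{x_n\}$ has an $\mathcal{S}$-$\mathcal{I}^{\mathcal{K}}$-cluster point $p$, so there is $M\in\mathcal{I}^{*}$ with $\{n\in M:x_n\in U\}\notin\mathcal{K}$ for every $U\in SO(X)$ containing $p$. Since $\mathcal{K}$ is admissible, each such set $\{n\in M:x_n\in U\}$ is infinite, and because the $x_n$ are distinct this produces infinitely many distinct points of $A$ inside $U$; hence $U\cap A$ is infinite. Thus $p$ is a semi-$\omega$-accumulation point of $A$ in the sense of Definition~\ref{wac}. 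Note that this direction needs no hypothesis on $\mathcal{I}\cup\mathcal{K}$.

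For the converse, suppose that neither $\mathcal{I}$ nor $\mathcal{K}$ contains an infinite set; combined with admissibility this forces $\mathcal{I}=\mathcal{K}=Fin$, so $\mathbb{N}\in\mathcal{I}^{*}$ and ``$\notin\mathcal{K}$'' again means ``infinite''. Given an arbitrary sequence $\{x_n\}$ in $X$, I would distinguish two cases according to its range $R=\{x_n:n\in\mathbb{N}\}$. If $R$ is finite, some value $x\in R$ satisfies that $\{n:x_n=x\}$ is infinite, and then with $M=\mathbb{N}$ we get $\{n\in M:x_n\in U\}\supseteq\{n:x_n=x\}$ infinite for every semi-open $U\ni x$, so $x$ is an $\mathcal{S}$-$\mathcal{I}^{\mathcal{K}}$-cluster point of $\{x_n\}$. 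If $R$ is infinite, the hypothesis applied to $R$ yields a semi-$\omega$-accumulation point $p$ of $R$; then $U\cap R$ is infinite for every semi-open $U\ni p$, which forces $\{n:x_n\in U\}$ to be infinite (a finite index set would give a finite $U\cap R$), and once more $M=\mathbb{N}$ witnesses that $p$ is an $\mathcal{S}$-$\mathcal{I}^{\mathcal{K}}$-cluster point.

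The only point that is not purely formal is the converse, where one cannot in general extract a subsequence with distinct terms, so the finite-range case has to be handled on its own; the rest is the routine dictionary between index sets in $\mathbb{N}$ and point sets in $X$, exactly as in the proof of Theorem~\ref{clap}.
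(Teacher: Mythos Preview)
Your proof is correct and the forward implication is essentially identical to the paper's: pick a sequence of distinct points from the infinite set, take an $\mathcal{S}$-$\mathcal{I}^{\mathcal{K}}$-cluster point, and use admissibility of $\mathcal{K}$ to conclude that $\{n\in M:x_n\in U\}$ is infinite, hence $U\cap A$ is infinite.

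For the converse the paper takes a shortcut rather than arguing directly: once $\mathcal{I}$ and $\mathcal{K}$ contain no infinite sets (so both equal $Fin$), it invokes Remark~\ref{ikcr}(1) to identify $\mathcal{S}$-$\mathcal{I}^{\mathcal{K}}$-cluster points with $\mathcal{S}$-$\mathcal{I}$-cluster points and then appeals to the converse of Theorem~\ref{clap}. Your case split on whether the range is finite or infinite is exactly what underlies that cited result, so you have simply unpacked the citation into a self-contained argument. The two approaches are equivalent; yours is more explicit, the paper's is shorter by delegation.
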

	
	\begin{proof}
		Let $F$ (infinite) $\subset X$, then there exists a sequence $\{x _n\}$ of distinct points in $F$. Suppose that every sequence in $X$ has a $\mathcal{S}$-$\mathcal{I}^\mathcal{K}$-cluster point. Let $a$ be a $\mathcal{S}$-$\mathcal{I}^\mathcal{K}$-cluster point of $\{x_n\}$. Then, for $U \in SO(X)$ containing $a$, we have $\{n \in M: x_n \in U \} \notin \mathcal{K}$. Since as per assumption, $Fin \subset \mathcal{K}$ then the set $\{n \in M: x_n \in U \}$ is infinite. Since $x_n \in F$, $U \cap F$ is a infinite set. Hence $a$ is semi-$\omega$-accumulation point of $F$.
		
		Conversely, if $\mathcal{I}$, $\mathcal{K}$ does not contain any infinite subset of  $\mathbb{N}$, then the result follows immeditely by considering Remark \ref{ikcr} and Theorem \ref{clap}.
	\end{proof}
	
	\begin{corollary}
		Let $X$ be a space. If every sequence $\{x_n \}$ has a $\mathcal{S}$-$\mathcal{I}^\mathcal{K}$-cluster point, then every infinite subset of $X$ has a $\omega$-accumulation point. 
	\end{corollary}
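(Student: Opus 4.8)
The plan is to read this off the preceding theorem together with the elementary fact that $\tau \subseteq SO(X)$, i.e.\ that every open set is semi-open: in Definition \ref{D1}(1) one may take the open set $V$ itself for the witnessing open set, since $V \subseteq V \subseteq \overline{V}$. So the only work is to translate the ``semi'' conclusion of the theorem into its ordinary counterpart.

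First I would fix an arbitrary infinite subset $F \subseteq X$ and invoke the preceding theorem under the standing hypothesis that every sequence in $X$ has an $\mathcal{S}$-$\mathcal{I}^\mathcal{K}$-cluster point; this produces a semi-$\omega$-accumulation point $p$ of $F$, meaning (Definition \ref{wac}) that $U \cap F$ is infinite for every $U \in SO(X)$ with $p \in U$. Next, since any open neighbourhood $V$ of $p$ is in particular a semi-open neighbourhood of $p$, the previous line applies to $V$ and gives that $V \cap F$ is infinite. Hence $p$ is an $\omega$-accumulation point of $F$ in the usual sense, and as $F$ was an arbitrary infinite subset, every infinite subset of $X$ has an $\omega$-accumulation point.

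I do not expect any genuine obstacle here: all of the mathematical content sits in the preceding theorem, whose forward direction uses only admissibility of $\mathcal{I}$ and $\mathcal{K}$ (a standing assumption) and not the hypothesis that $\mathcal{I} \cup \mathcal{K}$ is an ideal, so the corollary needs no extra assumptions. The single point worth stating explicitly is the inclusion $\tau \subseteq SO(X)$, which makes the passage from semi-$\omega$-accumulation points to $\omega$-accumulation points automatic.
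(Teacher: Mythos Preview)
Your proposal is correct and is exactly the intended argument: the paper states the corollary without proof, treating it as an immediate consequence of the preceding theorem together with the inclusion $\tau \subseteq SO(X)$, which is precisely the observation you spell out. Your remark that only the forward direction of the theorem (and hence only admissibility of $\mathcal{K}$, not the extra hypothesis on $\mathcal{I}\cup\mathcal{K}$) is needed is also accurate.
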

	
	\vspace{2mm}
	Considering $X$ to be a topological space, we have the accompanying results showing contrast between the different notions of compactness and $\mathcal{S}$-$\mathcal{I}^\mathcal{K}$-convergence mode.

	\begin{proposition}
		Let $\mathcal{I}$, $\mathcal{K}$ be two ideals on $\mathbb{N}$ that doesnot contain infinite sets. If any sequence $\{x_n\}$ in $X$ has a subsequence which is $\mathcal{S}$-$\mathcal{I}^\mathcal{K}$-convergent, then $X$ is sequentially compact space.
	\end{proposition}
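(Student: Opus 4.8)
The plan is to exploit the standing admissibility convention on the ideals together with the hypothesis. Since in this paper every ideal is admissible (it contains $Fin$) and here $\mathcal{I}$ and $\mathcal{K}$ are assumed to contain no infinite subset of $\mathbb{N}$, we are forced to have $\mathcal{I} = \mathcal{K} = Fin$. In particular $\mathcal{I}^*$ is exactly the filter of cofinite subsets of $\mathbb{N}$, and $\mathcal{I}\cup\mathcal{K} = Fin$ is an ideal. The whole argument then reduces to the observation that, in this degenerate case, $\mathcal{S}$-$\mathcal{I}^\mathcal{K}$-convergence of a (sub)sequence collapses to ordinary topological convergence, as already recorded in Remark \ref{last}(3).

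First I would take an arbitrary sequence $\{x_n\}$ in $X$ and, by the hypothesis, pass to a subsequence $\{x_{n_k}\}$ that is $\mathcal{S}$-$\mathcal{I}^\mathcal{K}$-convergent, say to $\xi \in X$. Re-indexing $\{x_{n_k}\}$ as an honest sequence $\{y_k\}$ with $y_k = x_{n_k}$, by Definition there is a set $M \in \mathcal{I}^*$, that is, a cofinite $M \subseteq \mathbb{N}$, such that for every non-empty semi-open $U$ containing $\xi$ the set $\{k \in M : y_k \notin U\}$ lies in $\mathcal{K} = Fin$ and hence is finite.

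Next I would combine the finiteness of $\{k \in M : y_k \notin U\}$ with the finiteness of $\mathbb{N}\setminus M$ to conclude that $\{k \in \mathbb{N} : y_k \notin U\} \subseteq \{k \in M : y_k \notin U\} \cup (\mathbb{N}\setminus M)$ is finite for every non-empty semi-open $U \ni \xi$; this is precisely $\mathcal{S}$-convergence of $\{y_k\}$ to $\xi$ (alternatively, one simply quotes Remark \ref{last}(3)). Since every open set is semi-open, specializing $U$ to the open neighbourhoods of $\xi$ yields $y_k \to \xi$ in the topology of $X$. Thus $\{x_{n_k}\}$ is an ordinarily convergent subsequence of $\{x_n\}$, and as $\{x_n\}$ was arbitrary, $X$ is sequentially compact.

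I do not expect a genuine obstacle here; the only points needing a little care are the bookkeeping when transferring from the index set of the subsequence back to $\mathbb{N}$ (so that the cofinite set $M$ and the finitely many ``bad'' indices really union to a finite set) and the explicit remark that the assumption ``$\mathcal{I},\mathcal{K}$ contain no infinite set'' is exactly what forces the reduction to $Fin$, hence to usual convergence, in the first place.
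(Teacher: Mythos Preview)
Your argument is correct and is exactly the approach the paper takes: it simply invokes Remark~\ref{last}(3), noting that the hypothesis forces $\mathcal{I}=\mathcal{K}=Fin$, whence $\mathcal{S}$-$\mathcal{I}^{\mathcal{K}}$-convergence reduces to $\mathcal{S}$-convergence and hence to ordinary convergence. Your write-up merely spells out the details behind that one-line reference.
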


	\begin{proof}
		 This proof follows immediately as a consequence of Remark \ref{last} (iii).
	\end{proof}

	\begin{proposition}
		Let $\mathcal{I}$, $\mathcal{K}$ be two ideals on $\mathbb{N}$. If for any infinite subset $F$ of $X$, there exists a $F$-valued distinct sequence $\{x_n\}$, which is $\mathcal{S}$-$\mathcal{I}^\mathcal{K}$-convergent in $X$, then $X$ is a limit point compact space.
	\end{proposition}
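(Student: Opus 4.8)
The plan is to reduce the statement immediately to the theorem already established above, namely the one asserting that if there is an $F$-valued sequence $\{x_n\}$ with distinct terms which is $\mathcal{S}$-$\mathcal{I}^\mathcal{K}$-convergent to $\xi \in X$, then $\xi$ is a semi-limit point of $F$ (that is, $F \cap (U \setminus \{\xi\}) \neq \phi$ for every semi-open $U$ containing $\xi$). So I would begin by fixing an arbitrary infinite subset $F$ of $X$; the goal is to produce an accumulation point of $F$ in the ordinary topological sense, since that is exactly what ``limit point compact'' demands.

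By hypothesis there exists an $F$-valued sequence $\{x_n\}$ with pairwise distinct terms which is $\mathcal{S}$-$\mathcal{I}^\mathcal{K}$-convergent in $X$; call its limit $\xi$. Applying the aforementioned theorem (the one whose consequence is Corollary \ref{sca}), $\xi$ is a semi-limit point of $F$. The only place where the topology enters is the routine observation $\tau \subseteq SO(X)$: every open neighbourhood of $\xi$ is in particular semi-open, so $F \cap (U \setminus \{\xi\}) \neq \phi$ for every open $U$ containing $\xi$. Hence $\xi$ is a limit point of $F$ in the usual sense. Distinctness of the terms of $\{x_n\}$ is precisely what upgrades the weaker conclusion ``$\xi \in Cl(F)$'' to ``$\xi$ is an accumulation point of $F$''.

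Since $F$ was an arbitrary infinite subset of $X$, every infinite subset of $X$ possesses a limit point, which is the definition of $X$ being limit point compact. I do not anticipate any real obstacle: the mathematical content is entirely carried by the previously proved theorem, and the argument amounts to noting that a semi-limit point is automatically an ordinary limit point because $\tau \subseteq SO(X)$, and that the distinctness assumption makes the limit a genuine accumulation point rather than merely a closure point.
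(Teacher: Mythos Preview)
Your proposal is correct and follows essentially the same approach as the paper, which simply records the result as an immediate consequence of Corollary~\ref{sca}. In fact you are slightly more careful than the paper: Corollary~\ref{sca} as stated only yields $\xi \in Cl(F)$, whereas you correctly invoke the theorem preceding it (which shows $F \cap (U \setminus \{\xi\}) \neq \phi$ for every semi-open $U$ containing $\xi$) to obtain a genuine accumulation point rather than merely a closure point, and then pass to ordinary open sets via $\tau \subseteq SO(X)$.
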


	\begin{proof}
		This proof follows immediately as a consequence of Corollary \ref{sca}.
	\end{proof}
	\vspace{1mm}
	\begin{definition} \label{SC}
	{\rm A space $X$ is said to be:}
		\begin{itemize}
			\item[(1)] semi-compact {\rm{\cite{GSR20}}} {\rm  if every semi-open cover of $X$ possesses a finite subcover.}
			\item[(2)] semi-Lindeloff {\rm{\cite{GSR20}}} {\rm if every semi-open cover of $X$ possesses a countable subcover.}
		\end{itemize}
	\end{definition}

	\begin{theorem}
		If $X$ is a semi-Lindeloff space and each $X$-valued sequence has an $\mathcal{S}$-$\mathcal{I}^\mathcal{K}$-cluster point, then $X$ is a semi-compact space.
	\end{theorem}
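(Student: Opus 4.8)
The plan is to argue by contraposition: assuming $X$ is \emph{not} semi-compact, I will construct an $X$-valued sequence having no $\mathcal{S}$-$\mathcal{I}^\mathcal{K}$-cluster point, contradicting the hypothesis. First, fix a semi-open cover $\mathcal{U}$ of $X$ admitting no finite subcover. Since $X$ is semi-Lindeloff (Definition \ref{SC}), $\mathcal{U}$ has a countable subcover $\{U_n : n \in \mathbb{N}\} \subseteq SO(X)$; this family still covers $X$, and being a subfamily of $\mathcal{U}$ it still admits no finite subcover. In particular $U_1 \cup U_2 \cup \cdots \cup U_n \neq X$ for every $n$, so one may pick a point $x_n \in X \setminus (U_1 \cup \cdots \cup U_n)$ for each $n$, producing a sequence $\{x_n\}$ in $X$.

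Next I claim $\{x_n\}$ has no $\mathcal{S}$-$\mathcal{I}^\mathcal{K}$-cluster point. Suppose, for contradiction, that $p \in X$ is such a point. Since $\{U_n\}$ covers $X$, there is an index $m$ with $p \in U_m$, and $U_m \in SO(X)$. By Definition \ref{SIKC} there is a set $M \in \mathcal{I}^*$ with $\{n \in M : x_n \in U_m\} \notin \mathcal{K}$. On the other hand, by construction, for every $n \geq m$ we have $x_n \notin U_1 \cup \cdots \cup U_n$, and since $m \leq n$ this union contains $U_m$; hence $x_n \notin U_m$ for all $n \geq m$. Therefore $\{n \in M : x_n \in U_m\} \subseteq \{1,2,\dots,m-1\}$ is finite, and since $\mathcal{K}$ is admissible ($Fin \subseteq \mathcal{K}$) this forces $\{n \in M : x_n \in U_m\} \in \mathcal{K}$, a contradiction. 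Thus no such $p$ exists, which contradicts the assumption that every $X$-valued sequence has an $\mathcal{S}$-$\mathcal{I}^\mathcal{K}$-cluster point; hence $X$ is semi-compact.

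I expect the argument to be essentially routine, mirroring the classical proof that a Lindeloff space in which every sequence clusters is compact, transported to the semi-open/$\mathcal{I}^\mathcal{K}$ setting. The only points needing minor care are: (a) noting that a countable subcover extracted from a cover with no finite subcover itself has no finite subcover — immediate, since a finite subfamily of the subcover is a finite subfamily of the original cover; and (b) invoking the standing admissibility hypothesis $Fin \subseteq \mathcal{K}$ to place a finite index set inside $\mathcal{K}$. No genuine obstacle is anticipated; if anything, the delicate bookkeeping step is simply making sure the chosen $x_n$ avoids \emph{all} of $U_1,\dots,U_n$ so that membership in any single $U_m$ is eventually excluded.
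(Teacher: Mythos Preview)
Your proof is correct and follows essentially the same approach as the paper: extract a countable semi-open subcover, construct a sequence whose $n$-th term avoids the first $n$ covering sets, and then observe that any putative $\mathcal{S}$-$\mathcal{I}^\mathcal{K}$-cluster point $p\in U_m$ would force $\{n\in M:x_n\in U_m\}$ to be finite, hence in $\mathcal{K}$ by admissibility, a contradiction. Your organization via contraposition and the direct choice $x_n\in X\setminus(U_1\cup\cdots\cup U_n)$ is in fact slightly cleaner than the paper's, which first passes to a subfamily $\{U_m\}$ of the countable subcover with $U_m\nsubseteq U_1\cup\cdots\cup U_{m-1}$ before choosing points; that extra reduction step is not needed for the argument to go through.
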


	\begin{proof}
		Suppose that $X$ is a semi-Lindeloff space and each $X$-valued sequence has a $\mathcal{S}$-$\mathcal{I}^\mathcal{K}$-cluster point. Let $\mathcal{S}=\{S_\lambda : \lambda \in \Lambda \}$ be a semi-open cover of $X$. So, by Definition \ref{SC}, there exists a countable subcover $\mathcal{S^{'}}= \{S_1, S_2, ..., S_m,...\}$. If possible, consider the sequence $U=\{U_m \}$ such that $U_1 = S_1$ and for each $m >1$, let $U_m =S_m$, where $S_m$ is the first member of the sequence of $U$'s such that $S_m \nsubseteq U_1 \cup U_2 \cup ... \cup U_{m-1}$. Assuming the axiom of choice, consider a sequence $x=\{x_m \}$ such that $x_1 \in U_1$ and for each $m >1$, let $x_m \in U_m - (U_1 \cup U_2 \cup ... \cup U_{m-1})$. Now by our hypothesis, $\{x_m\}$ has a $\mathcal{S}$-$\mathcal{I}^\mathcal{K}$-cluster point, say $l$. Then, there exist i such that $l \in U_j$. Consequently, there exists a set $M \in \mathcal{I}^*$ such that $\{n \in M : x_n \in U_j \} \notin \mathcal{K}$. So, the set $\{n \in M : x_n \in U_j \}$ must be a infinite subset of $\mathbb{N}$. Thus, there exists $k > j$ such that $ k \in \{n \in M : x_n \in U_j \}$; that is $x_k \in U_j$, which is a contradiction.  Subsequently, there must exist $m_0 \in \mathbb{N}$ such that the process of induction for $\{U_m \}$ is impossible to continue after $m=m_0$. Therefore, $\{U_1, U_2, ..., U_{m_0} \}$ is a finite subcover of $X$ for given cover $\mathcal{S}$.
	\end{proof}

	\begin{corollary}
		If $X$ is a semi-Lindeloff space and each $X$-valued sequence has an $\mathcal{S}$-$\mathcal{I}^\mathcal{K}$-cluster point, then $X$ is a compact space.
	\end{corollary}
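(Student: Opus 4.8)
The plan is to obtain this as a one-line consequence of the theorem immediately preceding it. That theorem already asserts that, under exactly the present hypotheses (\,$X$ semi-Lindeloff and every $X$-valued sequence admitting an $\mathcal{S}$-$\mathcal{I}^\mathcal{K}$-cluster point\,), the space $X$ is semi-compact in the sense of Definition \ref{SC}(1). So the entire task reduces to passing from semi-compactness to ordinary compactness, i.e.\ to deleting the prefix ``semi-'' from the conclusion.

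To do that, I would first record the inclusion $\tau \subseteq SO(X)$: if $U$ is open then $U \subseteq U \subseteq \overline{U}$, so $U$ itself plays the role of the open set demanded by Definition \ref{D1}(1), whence $U \in SO(X)$. Consequently every open cover of $X$ is, a fortiori, a semi-open cover of $X$. Applying semi-compactness to such a cover produces a finite subfamily that still covers $X$; since this subfamily consists of members of the original open cover, it is a finite open subcover, and therefore $X$ is compact.

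There is essentially no obstacle here: all the mathematical content already lives in the preceding theorem, and the only additional ingredient is the standard observation that the class of semi-open sets contains the topology, so that ``semi-compact'' is a priori a stronger property than ``compact''. (One could alternatively phrase it dually, noting that ``semi-Lindeloff'' implies ``Lindeloff'' under the same inclusion, but invoking the theorem and stripping the ``semi-'' is the shortest route.) Thus the proof is routine once the preceding theorem is in hand.
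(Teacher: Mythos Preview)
Your proposal is correct and matches the paper's approach: the corollary is stated without proof because it follows immediately from the preceding theorem together with the elementary observation that $\tau \subseteq SO(X)$, so that semi-compactness implies compactness. Your argument supplies exactly this missing line.
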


\section{$\mathcal{I}^\mathcal{K}$-convergence in product space}


In this Section, we discuss $\mathcal{I}^\mathcal{K}$-convergence in product space and its analogous study for semi-open sets. Suppose that $\mathcal{I}$, $\mathcal{K}$ be two ideals on $\mathbb{N}$ and $\{x(n) \}_{n \in \mathbb{N}}$ be a function (generalization of a sequence) in a topological space $X$. Proceeding theorem ensures that a function is $\mathcal{I}^\mathcal{K}$-convergent in the product space if and only if each component function is $\mathcal{I}^\mathcal{K}$-convergent in each component space.
	
\begin{theorem} \label{PSIK}
	Let $\{(X_\alpha, \tau_\alpha)\}_{\alpha \in \Lambda} $ be a family of topological spaces. Let $X= \prod_{\alpha \in \Lambda}^{} X_\alpha$ be the product space and $\{x_\alpha(n) \}$ be a $X_\alpha$-valued sequence, for each $\alpha \in \Lambda$. Then $\{x_\alpha(n)\}$ is $\mathcal{I}^\mathcal{K}$-convergent to $p_\alpha$, for all $\alpha \in \Lambda$ if and only if $\{(x_\alpha(n))_{\alpha \in \Lambda} \}$ is $\mathcal{I}^\mathcal{K}$-convergent to $(p_\alpha)_{\alpha \in \Lambda}$.
\end{theorem}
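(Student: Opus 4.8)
The plan is to prove the two implications separately: the forward one follows at once from continuity of the projections together with Theorem~\ref{IKc}, and the reverse one from the usual ``basic open set'' analysis of the product topology, the only delicate point being the passage to a single witnessing set.

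First I would treat the ``only if'' direction. For each $\alpha \in \Lambda$ the projection $\pi_\alpha : X \to X_\alpha$ is continuous, so by Theorem~\ref{IKc} it preserves $\mathcal{I}^\mathcal{K}$-convergence. Hence, if $\{(x_\alpha(n))_{\alpha \in \Lambda}\}$ is $\mathcal{I}^\mathcal{K}$-convergent to $(p_\alpha)_{\alpha \in \Lambda}$, applying $\pi_\alpha$ shows that $\{x_\alpha(n)\}$ is $\mathcal{I}^\mathcal{K}$-convergent to $p_\alpha$ for every $\alpha \in \Lambda$.

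For the ``if'' direction, assume $\{x_\alpha(n)\}$ is $\mathcal{I}^\mathcal{K}$-convergent to $p_\alpha$ for each $\alpha$. By the definition of $\mathcal{I}^\mathcal{K}$-convergence, for every $\alpha$ there is $M_\alpha \in \mathcal{I}^*$ with $\{n \in M_\alpha : x_\alpha(n) \notin U_\alpha\} \in \mathcal{K}$ for every open $U_\alpha$ containing $p_\alpha$. A basic neighbourhood $W$ of $(p_\alpha)_{\alpha \in \Lambda}$ in $X$ has the form $W = \prod_{\alpha \in \Lambda} W_\alpha$ with $W_\alpha = X_\alpha$ outside a finite set $F = \{\alpha_1, \dots, \alpha_k\} \subset \Lambda$. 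Taking $M_W = M_{\alpha_1} \cap \dots \cap M_{\alpha_k}$, its complement $M_W^{\complement} = \bigcup_{i=1}^{k} M_{\alpha_i}^{\complement}$ is a finite union of members of $\mathcal{I}$, so $M_W \in \mathcal{I}^*$; and, since $\mathcal{K}$ is closed under finite unions,
\begin{center}
	$\{n \in M_W : x(n) \notin W\} = \bigcup_{i=1}^{k} \{n \in M_W : x_{\alpha_i}(n) \notin W_{\alpha_i}\} \in \mathcal{K}$,
\end{center}
because each term is contained in $\{n \in M_{\alpha_i} : x_{\alpha_i}(n) \notin W_{\alpha_i}\} \in \mathcal{K}$.

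The step I expect to be the main obstacle is producing, out of the family $\{M_W\}$ of local witnesses, a \emph{single} $M \in \mathcal{I}^*$ that works simultaneously for all basic neighbourhoods $W$, as the definition of $\mathcal{I}^\mathcal{K}$-convergence demands. When $\Lambda$ is finite this is immediate: $M := \bigcap_{\alpha \in \Lambda} M_\alpha$ is a finite intersection of sets in $\mathcal{I}^*$, hence lies in $\mathcal{I}^*$, and the displayed computation with $F = \Lambda$ shows that $M$ witnesses $\mathcal{I}^\mathcal{K}$-convergence of $\{x(n)\}$ to $(p_\alpha)_{\alpha \in \Lambda}$. For an infinite index set the intersection of the $M_\alpha$ need not remain in $\mathcal{I}^*$, so one would have to restrict the statement to finite products or impose an additional hypothesis on $\mathcal{I}$ (for instance that $\mathcal{I}$ is a P-ideal and $\Lambda$ is countable, yielding a pseudo-intersection $M \in \mathcal{I}^*$ with $M \subseteq^{*} M_\alpha$ for all $\alpha$, which together with $Fin \subseteq \mathcal{K}$ makes the argument go through). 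Everything else is the routine bookkeeping indicated above.
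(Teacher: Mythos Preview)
Your ``only if'' direction (projections are continuous, apply Theorem~\ref{IKc}) is exactly what the paper does. For the ``if'' direction the paper argues contrapositively rather than directly: it assumes the product sequence is not $\mathcal{I}^\mathcal{K}$-convergent, so that for every $M\in\mathcal{I}^*$ there is a neighbourhood $U$ of $(p_\alpha)$ with $\{n\in M:(x_\alpha(n))\notin U\}\notin\mathcal{K}$; it then shrinks $U$ to a basic open set $\prod B_\alpha$ with $B_\alpha=X_\alpha$ off a finite set $\{\alpha_1,\dots,\alpha_k\}$, writes $\{n\in M:(x_\alpha(n))\notin\prod B_\alpha\}=\bigcup_{i=1}^k\{n\in M:x_{\alpha_i}(n)\notin B_{\alpha_i}\}$, and picks some $\alpha_0$ with $\{n\in M:x_{\alpha_0}(n)\notin B_{\alpha_0}\}\notin\mathcal{K}$, declaring this a contradiction to $\mathcal{I}^\mathcal{K}$-convergence of $x_{\alpha_0}$.

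The obstacle you flagged is genuine, and the paper's contrapositive route does not avoid it; it merely hides the dual form of the same problem. In the paper's argument the index $\alpha_0$ and the set $B_{\alpha_0}$ depend on the chosen $M$, so one has only shown: for each $M\in\mathcal{I}^*$ there exist $\alpha_0$ and an open $B_{\alpha_0}\ni p_{\alpha_0}$ with $\{n\in M:x_{\alpha_0}(n)\notin B_{\alpha_0}\}\notin\mathcal{K}$. That is strictly weaker than the negation of $\mathcal{I}^\mathcal{K}$-convergence of some fixed component, which would require a single $\alpha_0$ that fails for \emph{every} $M\in\mathcal{I}^*$. Your direct argument makes the difficulty explicit (one needs a single $M\in\mathcal{I}^*$ dominating all the $M_\alpha$), and your remarks about finite $\Lambda$ or P-ideals are the natural hypotheses under which either approach goes through. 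In short: your proof is at least as complete as the paper's, and your diagnosis of the infinite-index case is correct.
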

	
	\begin{proof}
		Consider the product space $X= \prod X_\alpha$, of the topological spaces $\{X_\alpha \}_{\alpha \in \Lambda}$. 
		Suppose that for each $\alpha \in \Lambda$, $\{x_\alpha(n) \}$ is a sequence in $X_\alpha$, $\mathcal{I}^\mathcal{K}$-convergent to $p_\alpha \in X_\alpha$. We claim that $(x_\alpha (n))_{\alpha \in \Lambda}$ is $\mathcal{I}^\mathcal{K}$-convergent to $(p_\alpha)_{\alpha \in \Lambda} \in X$. 
		
		\vspace{2mm}
		Contrapositively, suppose that $(x_\alpha (n))_{\alpha \in \Lambda}$ is not $\mathcal{I}^\mathcal{K}$-convergent to $(p_\alpha)_{\alpha \in \Lambda}$. Then, \linebreak for each $M \in \mathcal{I}^*$, there exists an open set $U$ containing $(p_\alpha)_{\alpha \in \Lambda}$ such that \linebreak $\{n \in M: (x_\alpha(n))_{\alpha \in \Lambda} \notin U \} \notin \mathcal{K}$. Now, there exists a basic open set $B= \prod_{\alpha \in \Lambda} B_\alpha \subset U$ \linebreak containing $(p_\alpha)_{\alpha \in \Lambda}$, where $B_\alpha =X_\alpha$, except for finite no of $\alpha \in \Lambda$, say $\alpha_1, \alpha_2,..., \alpha_k$. So, we have 
		\begin{center}
			 $\{n \in M: (x_\alpha(n))_{\alpha \in \Lambda} \notin U \} \subset \{n \in M: (x_\alpha(n))_{\alpha \in \Lambda} \notin  \prod_{\alpha \in \Lambda} B_\alpha \} \}$.
		\end{center}
		Since, $\{n \in M: (x_\alpha(n))_{\alpha \in \Lambda} \notin U \} \notin \mathcal{K}$, that implies
		\begin{center}
			$\{n \in M: (x_\alpha(n))_{\alpha \in \Lambda} \notin  \prod_{\alpha \in \Lambda} B_\alpha \} \notin \mathcal{K}$.
		\end{center}
	Again, we observe that
	\begin{align*}
	\{n \in M: (x_\alpha(n))_{\alpha \in \Lambda} \notin  \prod_{\alpha \in \Lambda} B_\alpha \} 
	=& \bigcup_{ \alpha \in \Lambda}  \{n \in M: x_{\alpha}(n) \notin B_\alpha \} \\
	=& \bigcup_{i=1}^{k}  \{n \in M: x_{\alpha_i}(n) \notin B_{\alpha_i} \}.
		\end{align*}
		Therefore, we have $\bigcup_{i=1}^{k}  \{n \in M: x_{\alpha_i}(n) \notin B_{\alpha_i} \} \notin \mathcal{K}$. Now there must exists $\alpha_0 \in \{\alpha_1, \alpha_2, ..., \alpha_k \}$ such that $\{n \in M: x_{\alpha_0}(n) \notin \prod_{\alpha \in \Lambda} B_\alpha \} \notin \mathcal{K}$. Otherwise, if $\{n \in M: (x_{\alpha_i}(n))_{{\alpha_{i}} \in \Lambda} \notin  \prod_{\alpha \in \Lambda} B_\alpha \} \in \mathcal{K}$ for each $\alpha_i \in \{\alpha_1, \alpha_2, ..., \alpha_k \}$, then
		\begin{center}
			$\bigcup_{i=1}^{k}  \{n \in M: (x_{\alpha_i}(n))_{{\alpha_{i}} \in \Lambda} \notin  \prod_{\alpha \in \Lambda} B_\alpha \} \in \mathcal{K}$
		\end{center}
		 This is a contradiction. That means $x_{\alpha_0} (n)$ is not $\mathcal{I}^\mathcal{K}$-convergent to $p_{\alpha_0}$ and that contradicts our assumption. Thus, we must have $(x_\alpha (n))_{\alpha \in \Lambda}$ is $\mathcal{I}^\mathcal{K}$-convergent to $(p_\alpha)_{\alpha \in \Lambda} \in X$.
	
	\vspace{2mm}
	\noindent Conversely, suppose that $(x_\alpha (n))_{\alpha \in \Lambda}$ is $\mathcal{I}^\mathcal{K}$-convergent to $(p_\alpha)_{\alpha \in \Lambda}$. For each $\alpha \in \Lambda$, let $P_\alpha$ be  projection mapping from $X = \prod_{\alpha \in \Lambda} X_\alpha$ to $X_\alpha$. Again, $P_\alpha$ is a continuous mapping for each $\alpha \in \Lambda$. So, by Theorem \ref{IKc}, for each $\alpha \in \Lambda$, $P_\alpha$ preserves $\mathcal{I}^\mathcal{K}$-convergence. Thus, $P_\alpha((x_\alpha(n))_{\alpha \in \Lambda}) = x_\alpha(n)$, being a continuous image of $(x_\alpha(n)_{\alpha \in \Lambda}$, is $\mathcal{I}^\mathcal{K}$-convergent to $P_\alpha((p_\alpha)_{\alpha \in \Lambda }) = p_\alpha$.
	\end{proof}
	
	Recall that if $(\mathcal{I}_\alpha)_{\alpha \in \Lambda}$ is a chain of ideals on a set $S$, then $\bigcup_{ \alpha \in \Lambda} \mathcal{I}_\alpha$ is an ideal on $X$ \cite{SV15}. We may have generalize the above theorem for a chain of ideals as follows.
	
	\begin{theorem}
		Let $(\mathcal{K}_\alpha)_{\alpha \in \Lambda}$ be a chain of proper ideals on $\mathbb{N}$ and $\mathcal{K}= \bigcup_{ \alpha \in \Lambda} \mathcal{K}_\alpha$, be a ideal on $\mathbb{N}$. Let \linebreak $X= \prod_{\alpha \in \Lambda} X_\alpha$ be the product space for a indexed family of topological spaces $\{X_\alpha \}_{\alpha \in \Lambda}$ and $\{x_\alpha(n) \}$ be a $X_\alpha$-valued sequence, for each $\alpha \in \Lambda$. Then $\{x_\alpha(n)\}$ is $\mathcal{I}^{\mathcal{K}_\alpha}$-convergent to $p_\alpha$, for all $\alpha \in \Lambda$ if and only if $\{(x_\alpha(n))_{\alpha \in \Lambda} \}$ is $\mathcal{I}^\mathcal{K}$-convergent to $(p_\alpha)_{\alpha \in \Lambda}$.
	\end{theorem}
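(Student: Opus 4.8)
The plan is to imitate the proof of Theorem~\ref{PSIK} almost verbatim, the one genuinely new ingredient being the inclusion $\mathcal{K}_\alpha\subseteq\mathcal{K}=\bigcup_{\alpha\in\Lambda}\mathcal{K}_\alpha$, valid for every $\alpha\in\Lambda$, which lets one shuttle between the ``global'' ideal $\mathcal{K}$ on the product and the ``local'' ideals $\mathcal{K}_\alpha$ on the factors. Throughout one uses that $\mathcal{K}$ is an ideal; this is exactly where the hypothesis that $(\mathcal{K}_\alpha)_{\alpha\in\Lambda}$ is a \emph{chain} enters, via the fact recalled just before the statement that the union of a chain of ideals is again an ideal.

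For the implication ``each factor $\mathcal{I}^{\mathcal{K}_\alpha}$-convergent $\Rightarrow$ product $\mathcal{I}^\mathcal{K}$-convergent'' I would argue by contradiction, as in Theorem~\ref{PSIK}. If $(x_\alpha(n))_{\alpha\in\Lambda}$ were not $\mathcal{I}^\mathcal{K}$-convergent to $(p_\alpha)_{\alpha\in\Lambda}$, then for every $M\in\mathcal{I}^*$ there would be a basic open box $B=\prod_{\alpha\in\Lambda}B_\alpha$ about $(p_\alpha)_{\alpha\in\Lambda}$, with $B_\alpha=X_\alpha$ for all $\alpha$ outside a finite set $\{\alpha_1,\dots,\alpha_k\}$, such that $\{n\in M:(x_\alpha(n))_{\alpha\in\Lambda}\notin B\}\notin\mathcal{K}$. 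Rewriting this set as $\bigcup_{i=1}^{k}\{n\in M:x_{\alpha_i}(n)\notin B_{\alpha_i}\}$ and using that $\mathcal{K}$ is an ideal, some coordinate $\alpha_0\in\{\alpha_1,\dots,\alpha_k\}$ satisfies $\{n\in M:x_{\alpha_0}(n)\notin B_{\alpha_0}\}\notin\mathcal{K}$, hence \emph{a fortiori} $\notin\mathcal{K}_{\alpha_0}$ since $\mathcal{K}_{\alpha_0}\subseteq\mathcal{K}$; taking $M$ to be a witness of the $\mathcal{I}^{\mathcal{K}_{\alpha_0}}$-convergence of $x_{\alpha_0}(n)$ then contradicts the defining property of that witness. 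When $\Lambda$ is finite one can skip the contradiction: put $M:=\bigcap_{\alpha\in\Lambda}M_\alpha\in\mathcal{I}^*$, with $M_\alpha$ witnessing convergence of the $\alpha$-th factor, and verify directly that $\{n\in M:(x_\alpha(n))_{\alpha\in\Lambda}\notin B\}\subseteq\bigcup_{i=1}^{k}\{n\in M_{\alpha_i}:x_{\alpha_i}(n)\notin B_{\alpha_i}\}\in\mathcal{K}$. For the converse, exactly as in Theorem~\ref{PSIK}, each coordinate projection $P_\alpha:X\to X_\alpha$ is continuous, so by Theorem~\ref{IKc} it preserves $\mathcal{I}^\mathcal{K}$-convergence; applying this to $P_\alpha((x_\alpha(n))_{\alpha\in\Lambda})=x_\alpha(n)$ and $P_\alpha((p_\alpha)_{\alpha\in\Lambda})=p_\alpha$ yields convergence of each factor, after which one must account for the fact that the ideal attached to the $\alpha$-th factor is $\mathcal{K}_\alpha$, not $\mathcal{K}$.

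The step I expect to be the main obstacle is precisely this bookkeeping of the witness set $M$ and the matching of ideals. In the contradiction argument the coordinate $\alpha_0$ depends on the chosen $M$, whereas $\mathcal{I}^\mathcal{K}$-convergence of the product needs a single $M\in\mathcal{I}^*$ working for all basic boxes simultaneously, and for infinite $\Lambda$ one cannot merely intersect the $M_\alpha$'s. I would therefore run the implication contrapositively and first isolate, by a pigeonhole argument over finite subfamilies of $\Lambda$ together with the filter property of $\mathcal{I}^*$, a single ``bad'' coordinate $\alpha^*$ for which $x_{\alpha^*}(n)$ fails to be $\mathcal{I}^{\mathcal{K}_{\alpha^*}}$-convergent; the chain hypothesis on $(\mathcal{K}_\alpha)$ keeps $\mathcal{K}$ an ideal so that the finite-union splitting stays legitimate. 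Isolating and cleanly stating this coordination lemma, and likewise pinning down in the converse how $\mathcal{K}_\alpha$-convergence of a factor is extracted from $\mathcal{I}^\mathcal{K}$-convergence of the product, is the part demanding real care; the rest is a transcription of Theorem~\ref{PSIK}.
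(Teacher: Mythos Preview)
Your overall plan coincides with the paper's own argument, which is literally a one-line deferral: ``analogous to Theorem~\ref{PSIK}, using that $(\mathcal{K}_\alpha)$ is a chain so $\mathcal{K}=\bigcup_\alpha\mathcal{K}_\alpha$ is an ideal.'' So at the level of strategy there is nothing to compare; you and the paper are doing the same thing.

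Where you go further than the paper is in flagging the two delicate points, and you are right that they are not merely cosmetic. The first, the dependence of the ``bad'' coordinate $\alpha_0$ on the choice of $M\in\mathcal{I}^*$ in the contrapositive argument, is already present in the paper's proof of Theorem~\ref{PSIK} itself and is simply not addressed there either; your suggested pigeonhole fix is more than the paper offers. The second point is sharper and is a genuine obstruction rather than a bookkeeping issue: from $\mathcal{I}^\mathcal{K}$-convergence of the product, the projection argument via Theorem~\ref{IKc} yields only $\mathcal{I}^\mathcal{K}$-convergence of each factor $x_\alpha(n)$, and since $\mathcal{K}_\alpha\subseteq\mathcal{K}$ there is in general no way to upgrade this to $\mathcal{I}^{\mathcal{K}_\alpha}$-convergence. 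Indeed, for a two-element chain $\mathcal{K}_1\subsetneq\mathcal{K}_2=\mathcal{K}$ one can take $x_1(n)$ that is $\mathcal{I}^{\mathcal{K}_2}$- but not $\mathcal{I}^{\mathcal{K}_1}$-convergent, together with any $\mathcal{I}^{\mathcal{K}_2}$-convergent $x_2(n)$, and the product is $\mathcal{I}^\mathcal{K}$-convergent while the conclusion for $\alpha=1$ fails. The paper's proof does not confront this, and your hesitation in the last paragraph is well placed: the ``converse'' direction, as stated, does not follow from the method of Theorem~\ref{PSIK}.
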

		
	\begin{proof}
		We omit the proof for being analogous to that of Theorem \ref{PSIK}. Using the fact that $(\mathcal{K}_\alpha)_{\alpha \in \Lambda}$ is a chain of proper ideals on a set $\mathbb{N}$ and $\mathcal{K}= \bigcup_{ \alpha \in \Lambda} \mathcal{K}_\alpha$ in Theorem \ref{PSIK}, we conclude the rest.
	\end{proof}
 
	\begin{corollary}
		Let $\{(X_\alpha, \tau_\alpha)\}_{\alpha \in \Lambda} $ be an family of topological spaces. Let \linebreak $X= \prod_{\alpha \in \Lambda}^{} X_\alpha$ be the product space and $\{x_\alpha(n) \}$ be a $X_\alpha$-valued sequence, for each $\alpha \in \Lambda$. Then $\{x_\alpha(n)\}$ is $\mathcal{S}$-$\mathcal{I}^\mathcal{K}$-convergent to $p_\alpha$, for all $\alpha \in \Lambda$ if and only if $\{(x_\alpha(n))_{\alpha \in \Lambda} \}$ is $\mathcal{S}$-$\mathcal{I}^\mathcal{K}$-convergent to $(p_\alpha)_{\alpha \in \Lambda}$.
	\end{corollary}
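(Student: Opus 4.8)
The plan is to re-run the proof of Theorem~\ref{PSIK} with semi-open sets taking over the role played there by open sets, splitting the equivalence into its two implications and treating each the way the corresponding half of Theorem~\ref{PSIK} is treated.

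For the implication ``$\{(x_\alpha(n))_{\alpha\in\Lambda}\}$ is $\mathcal{S}$-$\mathcal{I}^\mathcal{K}$-convergent to $(p_\alpha)_{\alpha\in\Lambda}$ $\Rightarrow$ each $\{x_\alpha(n)\}$ is $\mathcal{S}$-$\mathcal{I}^\mathcal{K}$-convergent to $p_\alpha$'' I would first note that every projection $P_\alpha\colon X\to X_\alpha$ is continuous and open, and that a continuous open map is irresolute: given $O\in SO(X_\alpha)$ with $W\subseteq O\subseteq\overline{W}$ for some open $W$, the set $P_\alpha^{-1}(W)$ is open, $P_\alpha^{-1}(W)\subseteq P_\alpha^{-1}(O)\subseteq P_\alpha^{-1}(\overline{W})$, and openness of $P_\alpha$ forces $P_\alpha^{-1}(\overline{W})\subseteq\overline{P_\alpha^{-1}(W)}$, so $P_\alpha^{-1}(O)\in SO(X)$. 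Hence, by the theorem that irresolute maps preserve $\mathcal{S}$-$\mathcal{I}^\mathcal{K}$-convergence (the irresolute counterpart of Theorem~\ref{scp}), $x_\alpha(n)=P_\alpha\bigl((x_\beta(n))_{\beta\in\Lambda}\bigr)$ is $\mathcal{S}$-$\mathcal{I}^\mathcal{K}$-convergent to $P_\alpha\bigl((p_\beta)_{\beta\in\Lambda}\bigr)=p_\alpha$ for every $\alpha$. This is the painless direction.

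For the converse I would imitate the contrapositive argument in Theorem~\ref{PSIK}. The structural input to isolate first is that a box $\prod_{\alpha}V_\alpha$ with $V_\alpha\in SO(X_\alpha)$ for all $\alpha$ and $V_\alpha=X_\alpha$ for all but finitely many $\alpha$ is semi-open in $X$: picking open $W_\alpha$ with $W_\alpha\subseteq V_\alpha\subseteq\overline{W_\alpha}$ (and $W_\alpha=X_\alpha$ whenever $V_\alpha=X_\alpha$), the set $\prod_\alpha W_\alpha$ is basic open and $\prod_\alpha W_\alpha\subseteq\prod_\alpha V_\alpha\subseteq\prod_\alpha\overline{W_\alpha}=\overline{\prod_\alpha W_\alpha}$, the last equality being the identity ``the closure of a product is the product of the closures'' in the product topology. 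Granting a reduction to such boxes, the rest goes word-for-word as in Theorem~\ref{PSIK}: if $(x_\alpha(n))_\alpha$ is not $\mathcal{S}$-$\mathcal{I}^\mathcal{K}$-convergent to $(p_\alpha)_\alpha$, then for each $M\in\mathcal{I}^*$ one gets a nonempty $U\in SO(X)$ through $(p_\alpha)_\alpha$ with $\{n\in M:(x_\alpha(n))_\alpha\notin U\}\notin\mathcal{K}$; passing to a semi-open box $\prod_\alpha V_\alpha\subseteq U$ containing $(p_\alpha)_\alpha$, with $V_\alpha=X_\alpha$ except for $\alpha_1,\dots,\alpha_k$, and using $\{n\in M:(x_\alpha(n))_\alpha\notin\prod_\alpha V_\alpha\}=\bigcup_{i=1}^{k}\{n\in M:x_{\alpha_i}(n)\notin V_{\alpha_i}\}$ together with the fact that a finite union lying outside $\mathcal{K}$ has a member outside $\mathcal{K}$, one extracts a coordinate $\alpha_0$ at which $x_{\alpha_0}(n)$ fails to be $\mathcal{S}$-$\mathcal{I}^\mathcal{K}$-convergent to $p_{\alpha_0}$, a contradiction.

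The hard part is precisely the clause ``passing to a semi-open box $\prod_\alpha V_\alpha\subseteq U$ containing $(p_\alpha)_\alpha$''. For an \emph{open} neighborhood $U$ this is the usual basic-neighborhood step, but for a \emph{semi-open} $U$ it can fail: a semi-open neighborhood of a point in a product need not contain any semi-open box through that point (in $\mathbb{R}^2$ the semi-open set $\{(0,0)\}\cup\{(s,t):t>0\}$ contains no semi-open rectangle through the origin). The honest remedy is to do the reduction on the open interior: replace $U$ by $W=\mathrm{int}(U)$, use $(p_\alpha)_\alpha\in\overline{W}$ and $W\subseteq U$, extract a basic open box from $W$, and then account separately for the points of $U\setminus W\subseteq\partial W$. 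Equivalently, one checks the $\mathcal{S}$-$\mathcal{I}^\mathcal{K}$ condition only against the cofinal family of semi-open neighborhoods of $(p_\alpha)_\alpha$ of the two shapes ``basic open box through $(p_\alpha)_\alpha$'' and ``(basic open box $\prod_\alpha B_\alpha$)$\,\cup\{(p_\alpha)_\alpha\}$'', and for the second shape one exploits that each $B_\alpha\cup\{p_\alpha\}$ is itself a semi-open neighborhood of $p_\alpha$ since $(p_\alpha)_\alpha\in\overline{\prod_\alpha B_\alpha}=\prod_\alpha\overline{B_\alpha}$. Keeping track of how the exceptional coordinates $\alpha_1,\dots,\alpha_k$ depend on $M$ and $U$ is the only remaining bookkeeping; once the neighborhood reduction is settled, the set-algebra is identical to that of Theorem~\ref{PSIK}.
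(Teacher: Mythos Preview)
The paper supplies no proof for this corollary; it is stated immediately after Theorem~\ref{PSIK} and its chain-of-ideals variant, presumably as a routine semi-open analogue. Your backward direction, via the observation that projections are continuous and open, hence irresolute, and then invoking the irresolute preservation theorem, is correct and in fact sharper than anything the paper writes down for the $\mathcal{S}$-setting.

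Your forward direction, however, does not go through. You correctly identify the obstruction---an arbitrary semi-open neighborhood of $(p_\alpha)_\alpha$ in the product need not contain a semi-open box through that point, so the basic-neighborhood reduction from Theorem~\ref{PSIK} does not transfer---and this is a point the paper glosses over entirely. But neither of your proposed repairs works. First, the family of sets of the form ``basic open box $B$ through $(p_\alpha)_\alpha$'' or ``$B\cup\{(p_\alpha)_\alpha\}$ with $(p_\alpha)_\alpha\in\overline{B}$'' is \emph{not} cofinal among the semi-open neighborhoods of $(p_\alpha)_\alpha$: in $\mathbb{R}^2$ the set $U=\{(0,0)\}\cup\bigcup_{n\ge 1}\bigl(\tfrac{1}{n+1},\tfrac{1}{n}\bigr)^2$ is semi-open and contains no open rectangle $I\times J$ with $(0,0)\in\overline{I}\times\overline{J}$, since any such $I$ and $J$ would each meet infinitely many of the intervals $(\tfrac{1}{n+1},\tfrac{1}{n})$ and hence $I\times J$ would contain off-diagonal products not lying in $U$. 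Second, even when such a $B=\prod_\alpha B_\alpha$ exists, the set $\{n\in M:(x_\alpha(n))_\alpha\notin B\cup\{(p_\alpha)_\alpha\}\}$ is not contained in $\bigcup_i\{n\in M:x_{\alpha_i}(n)\notin B_{\alpha_i}\cup\{p_{\alpha_i}\}\}$: a point can lie in every $B_{\alpha}\cup\{p_{\alpha}\}$ coordinatewise yet fall outside $B\cup\{(p_\alpha)_\alpha\}$ (for $p=(0,0)$ and $B=(0,a)\times(0,b)$ the point $(0,t)$ with $0<t<b$ does exactly this). So the coordinatewise decomposition that drives Theorem~\ref{PSIK} breaks down at this step, and your argument does not establish the forward implication. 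In short, you have located the genuine difficulty the paper ignores, but your sketch does not resolve it.
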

	
	\begin{corollary}
		Let $(\mathcal{K}_\alpha)_{\alpha \in \Lambda}$ be a chain of proper ideals on $\mathbb{N}$ and $\mathcal{K}= \bigcup_{ \alpha \in \Lambda} \mathcal{K}_\alpha$, be an ideal on $\mathcal{N}$. Let \linebreak $X= \prod_{\alpha \in \Lambda} X_\alpha$ be the product space for a family of topological spaces $\{X_\alpha \}_{\alpha \in \Lambda}$ and $\{x_\alpha(n) \}$ be a $X_\alpha$-valued sequence, for each $\alpha \in \Lambda$. Then $\{x_\alpha(n)\}$ is $\mathcal{S}$-$\mathcal{I}^{\mathcal{K}_\alpha}$-convergent to $p_\alpha$, for all $\alpha \in \Lambda$ if and only if $\{(x_\alpha(n))_{\alpha \in \Lambda} \}$ is $\mathcal{S}$-$\mathcal{I}^\mathcal{K}$-convergent to $(p_\alpha)_{\alpha \in \Lambda}$.
	\end{corollary}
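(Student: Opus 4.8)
The plan is to transcribe the proof of Theorem~\ref{PSIK} with ``open set'' replaced by ``semi-open set'' and ``continuous'' by ``irresolute'' throughout, and then to insert the chain hypothesis exactly where it was used to obtain the chain-of-ideals refinement of Theorem~\ref{PSIK}; equivalently, one feeds $\mathcal{K}=\bigcup_{\alpha\in\Lambda}\mathcal{K}_\alpha$ into the semi-open counterpart of Theorem~\ref{PSIK} established just above. The chain property enters only through the facts, recalled from \cite{SV15}, that $\mathcal{K}$ is then a proper ideal and that any finite union $A_{\alpha_1}\cup\dots\cup A_{\alpha_k}$ with $A_{\alpha_i}\in\mathcal{K}_{\alpha_i}$ again lies in $\mathcal{K}$ (since the $\mathcal{K}_{\alpha_i}$ all sit inside a common one among them). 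Two small observations about the product topology make the transcription go: if $B_{\alpha}\in SO(X_{\alpha})$ for finitely many indices and $B_{\alpha}=X_{\alpha}$ otherwise, then $\prod_{\alpha}B_{\alpha}\in SO(X)$ (because $\operatorname{int}\prod_{\alpha}B_{\alpha}=\prod_{\alpha}\operatorname{int}B_{\alpha}$ and $\prod_{\alpha}B_{\alpha}\subseteq\prod_{\alpha}\overline{\operatorname{int}B_{\alpha}}=\overline{\operatorname{int}\prod_{\alpha}B_{\alpha}}$), and consequently $P_{\beta}^{-1}(S)=S\times\prod_{\alpha\neq\beta}X_{\alpha}\in SO(X)$ whenever $S\in SO(X_{\beta})$, i.e.\ every projection $P_{\beta}\colon X\to X_{\beta}$ is irresolute.

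For the forward implication, suppose each $\{x_{\alpha}(n)\}$ is $\mathcal{S}$-$\mathcal{I}^{\mathcal{K}_{\alpha}}$-convergent to $p_{\alpha}$, witnessed by $M_{\alpha}\in\mathcal{I}^{*}$. Given $U\in SO(X)$ with $(p_{\alpha})_{\alpha}\in U$, pass to a semi-open ``box'' $\prod_{\alpha}B_{\alpha}\subseteq U$ of the special form above still containing $(p_{\alpha})_{\alpha}$, say with $B_{\alpha}=X_{\alpha}$ off the finite set $\{\alpha_{1},\dots,\alpha_{k}\}$, and set $M=M_{\alpha_{1}}\cap\dots\cap M_{\alpha_{k}}\in\mathcal{I}^{*}$. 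Then
\[
\{n\in M:(x_{\alpha}(n))_{\alpha}\notin U\}\ \subseteq\ \bigcup_{i=1}^{k}\{n\in M_{\alpha_{i}}:x_{\alpha_{i}}(n)\notin B_{\alpha_{i}}\},
\]
each set on the right belongs to $\mathcal{K}_{\alpha_{i}}\subseteq\mathcal{K}$, so the union, hence the left-hand side, lies in $\mathcal{K}$; thus $(x_{\alpha}(n))_{\alpha}$ is $\mathcal{S}$-$\mathcal{I}^{\mathcal{K}}$-convergent to $(p_{\alpha})_{\alpha}$. For the converse, if $(x_{\alpha}(n))_{\alpha}$ is $\mathcal{S}$-$\mathcal{I}^{\mathcal{K}}$-convergent to $(p_{\alpha})_{\alpha}$, apply the irresolute map $P_{\beta}$ and the theorem that irresolute functions preserve $\mathcal{S}$-$\mathcal{I}^{\mathcal{K}}$-convergence to obtain that $x_{\beta}(n)=P_{\beta}((x_{\alpha}(n))_{\alpha})$ is $\mathcal{S}$-$\mathcal{I}^{\mathcal{K}}$-convergent to $p_{\beta}$; the sharpening from $\mathcal{K}$ to the individual ideal $\mathcal{K}_{\beta}$ is read off exactly as in the chain-of-ideals theorem, from the chain condition and $\mathcal{K}=\bigcup_{\alpha}\mathcal{K}_{\alpha}$.

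The genuinely delicate point — the only thing that is not a direct copy of the open-set argument — is the box-reduction invoked in the forward direction: for open sets every neighbourhood of a point in a product contains a basic box around that point, but an arbitrary $U\in SO(X)$ containing $(p_{\alpha})_{\alpha}$ need \emph{not} contain a product of semi-open sets that is constant off a finite set and still contains $(p_{\alpha})_{\alpha}$. So the main obstacle is to supply, or to appropriately reinterpret, a sufficiently rich family of semi-open ``boxes'' through which the convergence can be tested; once that is in place, everything else is the routine ideal bookkeeping displayed above together with the chain fact. The converse direction carries no such difficulty, reducing at once to irresoluteness of the projections and the chain bookkeeping.
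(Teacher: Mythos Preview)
The paper supplies no proof for this corollary; it is simply listed after the chain-of-ideals theorem and the first $\mathcal{S}$-corollary, so your plan of transcribing Theorem~\ref{PSIK} with semi-open sets and feeding in $\mathcal{K}=\bigcup_{\alpha}\mathcal{K}_{\alpha}$ is precisely the intended (implicit) argument. You in fact go further than the paper by isolating the semi-open box-reduction step and correctly flagging it as problematic: an arbitrary $U\in SO(X)$ containing $(p_{\alpha})$ need not contain a semi-open box around $(p_{\alpha})$, and the paper never addresses this.

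Two further gaps deserve mention. In your forward direction the witness $M=M_{\alpha_{1}}\cap\dots\cap M_{\alpha_{k}}$ depends on the particular $U$ (through the finite set $\{\alpha_{1},\dots,\alpha_{k}\}$), whereas the definition of $\mathcal{S}$-$\mathcal{I}^{\mathcal{K}}$-convergence requires a \emph{single} $M\in\mathcal{I}^{*}$ chosen before $U$; with $\Lambda$ infinite you cannot take $M=\bigcap_{\alpha\in\Lambda}M_{\alpha}$. In the converse direction your ``sharpening from $\mathcal{K}$ to $\mathcal{K}_{\beta}$'' is not justified and does not follow from the chain condition: irresoluteness of $P_{\beta}$ yields only $\mathcal{S}$-$\mathcal{I}^{\mathcal{K}}$-convergence of $x_{\beta}(n)$, and since $\mathcal{K}_{\beta}\subseteq\mathcal{K}$, upgrading to the smaller ideal $\mathcal{K}_{\beta}$ is a genuine strengthening, not a consequence. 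Both issues are already latent in the paper's omitted proof of the open-set chain theorem, so you are not introducing new errors, merely inheriting the ones the paper glosses over.
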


\section*{ Acknowledgement}
The first author would like to thank the University Grants Comission (UGC) for awarding the junior research fellowship vide UGC-Ref. No.: 1115/(CSIR-UGC NET DEC. 2017), India.

\end{document}